\newlist{myenum}{enumerate}{1}
\setlist[myenum,1]{label*=($h_{\arabic*}$)}  
\setlist{leftmargin=13mm}
\newtheorem{lemma}{Lemma}
\newtheorem{theorem}{Theorem}
\newtheorem{corollary}{Corollary}
\theoremstyle{definition}
\newtheorem{definition}{Definition}
\newtheorem{example}{Example}
\DeclareMathAlphabet{\rscal}{U}{rsfs}{m}{n} 
\newcommand{\G}{\rscal{G}}
\newcommand{\E}{\mathbb{E}}
\newcommand{\Prb}{\mathbb P}
\newcommand{\R}{\mathbb R}
\newcommand{\Eq}{\boldsymbol{E}}
\newcommand{\myx}{{x}}
\newcommand{\myy}{{y}}
\newcommand{\D}{\Delta^{\! c}}
\newcommand{\edges}{{\vert E\vert}}
\newcommand{\Fix}{\textup{Fix}}
\newcommand{\CR}{\mathcal R}
\newcommand{\F}{\mathcal F}
\title[]{Multiple colour interacting urns on complete graphs}
\date{\today}
\begin{document} 
\author[B. Pires]{Benito Pires}
\author[R. A. Rosales]{Rafael A. Rosales}
\address[B. Pires, R. A. Rosales]{%
  Departamento de Computa\c{c}\~ao e Matem\'atica,
  Faculdade de Filosofia, Ci\^encias e Letras,
  Universidade de S\~ao Paulo,
  14040-901, Ribeir\~ao Preto - SP, Brazil
}
\email{benito@usp.br}
\email{rrosales@usp.br}
\thanks{The first author was partially supported by grant
  {\#}2019/10269-3 and  {\#}2022/14130-2
  S\~ao Paulo Research Foundation (FAPESP)
} 

\begin{abstract}
We present a multiple colour generalisation of the model of graph
interacting urns studied by Bena\"im \emph{et. al.}, Random
Struct. Alg., 46: 614-634, 2015. We show that for complete graphs and
for a broad class of reinforcement functions governing the
 {addition} of balls in the urns, the process of colour proportions
at each urn converges almost surely to the fixed points of the
 {reinforcement} function.
\end{abstract}

\keywords{generalized P\'olya urns, stochastic approximations,
  reinforced processes}

\subjclass[2020]{Primary: 60K35, 37C10; Secondary: 62L20, 37A50}

\maketitle


\section{Introduction}
Interacting P\'olya urn models have recently gained considerable
attention, see for instance \cite{AG17}, \cite{MCJJ22}, \cite{KS22},
\cite{LP19}, \cite{LL12} and \cite{RT22}. This article considers a
multiple colour gene\-ralisation of the model in \cite{BBCL15} when
the interaction is determined by the structure of a complete graph.
The model in \cite{BBCL15} is described as follows. Let $\G=(V,E)$ be
a finite, connected, undirected graph such that
$V=[d]=\{1,\ldots,d\}$. Each vertex $u\in V$ presents an urn with
$B_u(0) \geq 1$ balls at time $n=0$.  At each time $n \ge 1$ and to
each edge $\{u,v\}\in E$, one ball is added either to $u$ or $v$. The
ball is added to the urn at vertex $u$ with probability proportional
to $B_u(n)^\alpha$, where $B_u(n)$ denotes the number of balls in the
urn $u$ at time $n\geq 0$ and $\alpha > 0$ is a parameter that
reinforces the effect of the number of balls. \cite{BBCL15} is
concerned with strong laws for the proportion of balls in each urn
$X_v(n)$ depending upon $\G$ and $\alpha$. A complete characterisation
for finite, connected graphs in the linear case, i.e. when $\alpha =
1$, is given in \cite{MR3269167} and \cite{L16}.

To introduce our model, assume that an urn with $B^i_u(0)\ge 1$ balls
of colour $i \in C = \{1, 2, \ldots, c\}$, $c\ge 2$, is placed at each
vertex $u\in V$ of a complete graph $\G=(V,E)$. To simplify matters,
we assume that there exists $b_0 \ge 1$ such that $\sum_{u\in V}
B_u^i(0)=b_0$ for all $i\in C$, that is, we start the process with the
same total number of balls of each colour. The number of balls of
colour $i$ in the urn $u\in V$ at time $n\ge 0$ is denoted by
$B_u^i(n)$. Let $\F_n$, $n\ge 1$, be the $\sigma$-algebra
$\sigma(B_v^i(k); 1 \leq k \leq n, i \in C, v \in V)$. Denote by
\begin{equation}\label{process}
  X_u^i(n) = \frac{B_u^i(n)}{\sum_{v\in V} B_v^i(n)}
\end{equation}
 the proportion of balls of colour $i$ at time $n$ in the urn $u$. At
each step $n$ one ball of each colour is placed at each edge of
$\G$. For each $\{u, v\} \in E$, let $B_{v\to u}^i(n+1)$ be the event
defined as
\[
  B_{v\to u}^i(n+1) =
  \big\{\text{the ball of colour $i$ at edge $\{u,v\}$ goes to vertex
    $u$ at \text{time} $n+1$}\big\}.
\]
Conditionally on $\F_n$, the ball of colour $i$ placed at edge $\{u,
v\}$ at step $n+1$ is put into the urn at vertex $u$ according to the
probability
\begin{equation}\label{rule3}
  \Prb\big(B_{v\to u}^i(n+1) \mid \F_n\big)
  =
  \varphi\Big(\textstyle\sum_{k\in C{\setminus}\{i\}} \big(\alpha
  X_u^k(n) - \alpha X_v^k(n)\big)\Big)
\end{equation}
where $\varphi:\R \to (0,1)$ is any smooth function such that
\begin{myenum}[topsep=0.1cm, itemsep=0cm] 
\item there exists $M>0$ such that $0<\varphi'(t)<M$
  for all $t\in\mathbb{R}$;\label{deriv_Hyp} 
\item $\varphi(t) + \varphi(-t) = 1$ for all
  $t\in\mathbb{R}$.\label{odd_Hyp}
\end{myenum}

To gain some intuition about why this model may be interesting,
consider the case with two colours, $C=\{1,2\}$, with $\varphi$
defined by $t \mapsto 1/(1+e^{-t})$. In this case, for $i=1$, the
right hand side of \eqref{rule3} equals
\[
 \frac{\exp(\alpha
     X_u^2(n))}{\exp(\alpha X_u^2(n)) +   
   \exp(\alpha X_v^2(n))},
\]
and so, when $\alpha < 0$, the probability that a ball of colour $i=1$
is put into the urn $u$ decreases exponentially with the proportion of
balls of colour $2$. This model can be used to describe a
``competition" among the different colours for the urns. The case
$\alpha > 0$, on the other hand, defines a model for cooperative
dynamics. The model may be used to describe the dynamics of
residential segregation of individuals from $C$ groups into $d$
regions identified with the vertices of $\G$.  It may also be
conceived as a repeated population game among agents of $C$ types or
the competition of $d$ companies for $C$ products, providing a
generalisation of the examples of the single colour model in
\cite{BBCL15} when $\G$ is complete. A concrete example that served as
motivation is as follows. Suppose that three companies located at the
vertices of a triangle hire employees coming from nearby towns. In
this case, $\G = (V, E)$ is the complete graph with $V=\{1,2,3\}$
denoting the locations/labels of the companies and $E=\{\{1,2\},
\{1,3\}, \{2,3\}\}$ referring to the locations/labels of the
towns. Let $C=\{i,\bar{\imath}\}$ be a set of labels identifying
native Spanish and Portuguese speakers. The number of employees of
type $j\in C$ at company $v$ at time $n$ is $B_v^j(n)$ and $X_v^j(n)$
denotes the corresponding proportion relative to the total number of
employees of class $j$ in all three companies. At each month $n\ge 0$
and at each edge $\{u,v\}\in E$, a Spanish and a Portuguese speaking
candidate look for a job in one of the nearby companies $u$ and
$v$. Both companies have the same hiring policy that seeks, in the
long run, to equilibrate the number of employees speaking Spanish or
Portuguese. To be more precise, assume $\alpha>0$ and that the
candidate of type $i$ at edge $\{u,v\}$ at time $n$ will be paid US
$\$ 100{,}000\,\varphi\big(\alpha X_u^{\bar{\imath}}(n)-\alpha
X_v^{\bar{\imath}}(n)\big)$ as annual salary if he chooses company
$u$, and US $\$ 100{,}000\,\varphi\big(\alpha X_v^{\bar{\imath}}(n) -
\alpha X_u^{\bar{\imath}}(n)\big)$ if he chooses company $v$. Then, if
$X_u^{\bar{\imath}}(n)=X_v^{\bar{\imath}}(n)$, since
$\varphi(0)=\frac12$, both companies $u$ and $v$ will pay the same
salary. On the other hand, if $X_u^{\bar{\imath}}(n) >
X_v^{\bar{\imath}}(n)$ and if the candidate of type $i$ chooses the
company with the least number of employees of type $i$ (which means to
choose $u$), then the candidate will have a bigger salary, which
approaches the upper bound US~$\$ 100{,}000$ as $X_u^{\bar{\imath}}(n)
- X_v^{\bar{\imath}}(n)$ increases.

Strong laws for the proportions $X_v(n)$ are obtained in \cite{BBCL15}
via the stochastic approximation approach which essentially leads to
the study of a flow induced by an underlying vector field. The field
in \cite{BBCL15} is gradient. In our case, the underlying field is not
gradient, thus the induced dynamics is harder to be understood.  The
principal contribution of this article is the construction of a
Lyapunov function which allows us to establish global properties about
the vector field and thus almost sure convergence properties of
$X_v^i(n)$. The Lyapunov function is obtained by relating our field to
the one governing the evolution of Hopfield's neural networks, see
\cite{WOS:A1984SU47600030}. The main results of this article are
described in Section \ref{sec:Results}. Proofs are given in Sections
\ref{sec:edo}, \ref{sec:corol_proof} and \ref{sec:exampl_proof}.

\section{Statement of main results}\label{sec:Results}
By using \eqref{process}, define the \textit{process of colour
proportions} as
\begin{equation}\label{process2} 
  X= \{X(n)\}_{n\ge 0},\ \text{ where}\ \
  X(n)=\big(X^1_1(n),\ldots,  X_d^1(n)\ldots, X_1^c(n),\ldots, X_d^c(n)\big).
\end{equation} 
Observe that for each $n$, $X(n)$ takes values on
\[
  \Delta^c = \bigg\{(x_u^i)\in\mathbb{R}^{dc}: x_u^i\ge 0, \forall
    u,i,\,\, \textrm{and}\,\,  {\sum_{u=1}^d} x_u^i =1,\forall  i\bigg\}. 
\] 
We assume hereafter that the process $X$ is defined on an appropriate
probability space $(\Omega, \F, \Prb)$ endowed with the filtration
$\F_n$.

We are principally concerned with a strong law for $X$. To state our
main result, let $\pi = {(\pi_{u}^i):}\D\to\D$ be the map whose
coordinate-function at $\myx= \big(x_u^i\big)\in\D$ takes the value
\begin{equation}\label{pi}
\pi_u^i(\myx) = 
\dfrac{1}{\vert E\vert } \sum_{w\in V{\setminus}\{u\}}
\varphi\left(  \sum_{k\in C{\setminus}\{i\}}  \left(\alpha x_{u}^k -
    \alpha x_{w}^k\right)    \right).
\end{equation}
It follows from the hypotheses \ref{deriv_Hyp} and \ref{odd_Hyp} that
$\pi$ takes $\Delta^{\! c}$ into the interior of $\Delta^c$, as
$\pi_u^i>0$. Denote by Fix$(\pi)$ the set of fixed points of $\pi$,
i.e., Fix$(\pi) = \{x \in \mathbb{R}^{dc} : \pi(x) = x\}$.

\begin{theorem}\label{theorem1}
Let $\G = (V, E)$ be a finite complete graph, $X=\{X(n)\}_{n\ge 0}$ be
the process of colour proportions at each urn given by
\eqref{process2}, and $\pi$ be the map defined in \eqref{pi}. If the
set of fixed points of $\pi$ is finite, then $X=\{X(n)\}_{n\ge 0}$
converges almost surely to the fixed points of $\pi$, that is,
\[
  \sum_{x\in\  {\Fix}(\pi)} \mathbb{P}\left(\lim_{n\to\infty}
    X(n)=x\right)=1. 
\]       
\end{theorem}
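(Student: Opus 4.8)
The plan is to cast the evolution of $X(n)$ as a stochastic approximation algorithm and then apply the ODE method of Bena\"im. First I would check that the conditional increments have the Robbins--Monro form. Writing $B_u^i(n) = b_0 + (\text{number of balls of colour }i\text{ added to }u\text{ by time }n)$, so that the total number of balls of colour $i$ in the system at time $n$ is $b_0 + n\vert E\vert$, a direct computation using \eqref{rule3} gives
\[
  \E\big[X_u^i(n+1) - X_u^i(n)\mid \F_n\big]
  = \gamma_{n+1}\big(\pi_u^i(X(n)) - X_u^i(n)\big),
\]
where $\gamma_{n+1} = \vert E\vert/(b_0 + (n+1)\vert E\vert)$ is a deterministic step sequence with $\sum_n \gamma_n = \infty$ and $\sum_n \gamma_n^2 < \infty$. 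Thus $X(n+1) - X(n) = \gamma_{n+1}\big(F(X(n)) + U_{n+1}\big)$ with mean field $F(x) = \pi(x) - x$ and a bounded martingale-difference noise $U_{n+1}$ (boundedness is immediate since all coordinates live in the compact set $\D$, and $\varphi$ is bounded by \ref{deriv_Hyp}--\ref{odd_Hyp}). Note $F$ is smooth by smoothness of $\varphi$, $\D$ is convex compact and forward-invariant under the flow of $F$ because $\pi$ maps $\D$ into $\D$ (indeed into its interior), so the flow is well defined for all positive time.

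With these ingredients, the standard theory (Bena\"im, and the application already carried out in \cite{BBCL15}) tells us that almost surely the limit set $L(\omega)$ of $\{X(n)\}$ is a compact connected internally chain transitive set for the flow $\Phi$ of $F$ on $\D$. The conclusion of Theorem~\ref{theorem1} then follows from a single analytic fact about the vector field: the flow $\Phi$ admits a strict Lyapunov function, i.e.\ a $C^1$ function $\Lambda:\D\to\R$ that is strictly decreasing along every non-equilibrium trajectory and whose set of critical values $\Lambda(\Fix(\pi))$ has empty interior (automatic here since $\Fix(\pi)$ is assumed finite). Given such a $\Lambda$, any internally chain transitive set on which $\Lambda$ is constant must be contained in $\Fix(\pi) = \{F = 0\}$; since $\Fix(\pi)$ is finite and $L(\omega)$ is connected, $L(\omega)$ is a single fixed point, which is exactly the displayed conclusion after summing over the (finitely many) possible limit points.

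The heart of the matter, and the main obstacle, is therefore the construction of $\Lambda$: the field $F$ is \emph{not} a gradient, as the authors emphasise, so $\Lambda$ cannot be read off directly. The strategy flagged in the introduction is to exploit the structural analogy with Hopfield networks: the map $\pi_u^i(x)$ is, up to the normalisation $1/\vert E\vert$ and the sum over neighbours $w$, a sigmoidal response $\varphi$ applied to an affine ``input'' $\sum_{k\ne i}\alpha(x_u^k - x_w^k)$ that is antisymmetric in $(u,w)$; this antisymmetry plays the role of the symmetry of the synaptic weight matrix in Hopfield's convergence theorem, and should yield a Lyapunov function of the Cohen--Grossberg/Hopfield type, schematically $\Lambda(x) = -\tfrac12\langle \text{interaction term}\rangle + \sum \int \varphi^{-1}$. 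I would (i) write down the candidate $\Lambda$ explicitly in the coordinates $(x_u^i)$, using hypothesis \ref{odd_Hyp} to symmetrise the cross terms and \ref{deriv_Hyp} to guarantee $\varphi$ is a diffeomorphism onto $(0,1)$ so that $\varphi^{-1}$ and the integral $\int \varphi^{-1}$ are well defined; (ii) compute $\frac{d}{dt}\Lambda(\Phi_t(x))$ along trajectories and check, using the chain rule together with the positivity $\varphi' > 0$ from \ref{deriv_Hyp}, that it is a sum of terms each of the form $-(\text{positive})\cdot(\text{something})^2$, hence $\le 0$ with equality iff $F(x) = 0$; and (iii) handle the constraint that $\D$ is a product of simplices rather than a full affine space — one works in the affine subspace $\sum_u x_u^i = 1$ for each $i$, which is forward invariant, and since $\pi$ sends $\D$ into its interior the orbit stays away from the boundary faces for positive time, so no boundary equilibria or tangency issues arise. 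Once $\Lambda$ is in hand, the remaining steps are routine verifications of the stochastic-approximation hypotheses.
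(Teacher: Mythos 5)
Your overall strategy is exactly the paper's: recast $X$ as a Robbins--Monro scheme with step $\gamma_n$ of order $1/n$ and mean field $F=-\,\mathrm{id}+\pi$, invoke Bena\"im's ODE method so that the limit set of $X(n)$ is almost surely a connected, internally chain transitive set for the flow of \eqref{eqn:THE_field} on $\D$, and then reduce chain recurrence to $\Fix(\pi)$ by exhibiting a strict Lyapunov function of Hopfield type (finiteness of $\Fix(\pi)$ giving that the set of its Lyapunov values has empty interior). The stochastic-approximation part of your argument is fine and matches the paper's Appendix A.

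The genuine gap is that the Lyapunov function, which you yourself identify as ``the heart of the matter,'' is never constructed or verified; steps (i)--(iii) are a plan, not a proof. Moreover, the plan as stated runs into a difficulty you do not anticipate. A Hopfield energy requires the sigmoid to be applied to the state variables themselves, whereas in \eqref{pi} $\varphi$ is applied to linear combinations $\sum_{k\neq i}(\alpha x_u^k-\alpha x_v^k)$; so ``writing down the candidate $\Lambda$ directly in the coordinates $(x_u^i)$'' does not produce a system of Hopfield form, and the term $\int\varphi^{-1}$ has no direct meaning there. The paper instead passes to the difference variables $z_{iuv}=\sum_{k\neq i}(\alpha x_u^k-\alpha x_v^k)$, checks that the resulting weight array $w_{iuv}^{jrs}$ is symmetric, applies Hopfield's theorem in the $z$-coordinates, and takes $L\circ\Psi$ with $\Psi$ the (linear, non-injective) change of variables \eqref{Psi}. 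Because $\Psi$ is not injective, strict decrease of $L$ along non-constant $z$-solutions does not immediately give strict decrease of $L\circ\Psi$ along non-constant solutions of \eqref{eqn:THE_field}: one must rule out a non-constant solution $x(t)\in\D$ with $\Psi(x(t))$ constant. The paper closes this with a separate argument (if $\Psi(x(t))$ is constant then $\pi(x(t))$ is constant, so $x_u^i(t)=a^ie^{-t}+b_u^i$ with all $a_u^i$ equal across $u$ by non-singularity of $B$, and summing over $u$ and using $\sum_u x_u^i\equiv 1$ forces $a^i=0$, contradicting non-constancy). Without the explicit change of variables, the symmetry check on the weights, and this injectivity/degeneracy argument, the claim that a strict Lyapunov function exists on $\D$ --- and hence that the chain-recurrent set equals $\Fix(\pi)$ --- remains unproven, so the proof is incomplete at its central step.
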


As the next result clarifies, the hypothesis that $\pi$ has finitely
many fixed points in Theorem \ref{theorem1} is automatic when the
parameter $\alpha$ in \eqref{pi} has small absolute value. In this
case, there exists a unique asymptotic state, i.e., a unique point
$x\in\Delta^c$ such that $\lim_{n\to\infty} X(n)=x$. On the other
hand, it is an open question whether for all $\alpha$ satisfying
$\vert \alpha\vert\ge d/ \big(4 (c-1) \sup_{t\in\mathbb{R}}
\vert\varphi'(t)\vert \big)$, the number of fixed points of $\pi$ is
finite.

 \begin{corollary}\label{corollary1}
Let $\G = (V, E)$ be a finite complete graph, $X=\{X(n)\}_{n\ge 0}$ be
the process of colour proportions at each urn given by
\eqref{process2}, and $\pi$ be the map defined in \eqref{pi}. If
$\vert \alpha\vert< d\big/ \big(4 (c-1) \sup_{t\in\mathbb{R}}
\vert\varphi'(t)\vert \big)$, then $X=\{X(n)\}_{n\ge 0}$ converges
almost surely to the unique fixed point $x^*=\left(
    \frac1d,\ldots, \frac1d\right)\in\mathbb{R}^{dc}$ of $\pi$.
\end{corollary}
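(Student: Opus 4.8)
The plan is to derive Corollary \ref{corollary1} directly from Theorem \ref{theorem1}: under the stated bound on $|\alpha|$ I would show that $\pi$ has exactly one fixed point, namely $x^* = (\tfrac1d,\ldots,\tfrac1d)$, so that $\Fix(\pi)$ is finite and Theorem \ref{theorem1} applies. Concretely there are three steps: (i) check that $x^*\in\Fix(\pi)$; (ii) show that $\pi$ is a contraction of $\D$ in the supremum norm, which forces its fixed point to be unique; and (iii) invoke Theorem \ref{theorem1}.

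For step (i), I would observe that at $x = x^*$ every argument of $\varphi$ in \eqref{pi} vanishes, since $x_u^k - x_w^k = 0$ for all $u,w,k$; hypothesis \ref{odd_Hyp} then gives $2\varphi(0) = \varphi(0)+\varphi(-0) = 1$, i.e. $\varphi(0) = \tfrac12$, whence $\pi_u^i(x^*) = \edges^{-1}\sum_{w\in V\setminus\{u\}}\tfrac12 = (d-1)/(2\edges) = 1/d$, using $\edges = \binom d2 = d(d-1)/2$. Hence $\pi(x^*) = x^*$.

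For step (ii), I would compute the Jacobian $D\pi$ at an arbitrary $x\in\D$. Because the inner sum in \eqref{pi} omits $k = i$, one has $\partial\pi_u^i/\partial x_v^i = 0$ for every $v$; and for $j\ne i$ a direct differentiation gives $\partial\pi_u^i/\partial x_u^j = \alpha\,\edges^{-1}\sum_{w\ne u}\varphi'(\cdot)$ and $\partial\pi_u^i/\partial x_v^j = -\alpha\,\edges^{-1}\varphi'(\cdot)$ for $v\ne u$. Bounding each $|\varphi'|$ by $\sup_t|\varphi'(t)|$, which is finite and positive by \ref{deriv_Hyp}, the sum of the moduli of the entries in the row indexed by $(u,i)$ is at most
\[
(c-1)\!\left(\frac{(d-1)\,|\alpha|\sup_t|\varphi'(t)|}{\edges} + (d-1)\,\frac{|\alpha|\sup_t|\varphi'(t)|}{\edges}\right) = \frac{4(c-1)\,|\alpha|\sup_t|\varphi'(t)|}{d},
\]
again using $\edges = d(d-1)/2$. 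Under the hypothesis on $|\alpha|$ this equals a constant $L<1$, so $\sup_{x\in\D}\|D\pi(x)\|_{\infty\to\infty}\le L<1$. Since $\D$ is closed, hence complete, and convex, and $\pi(\D)\subseteq\D$ (indeed $\pi(\D)$ lies in the interior of $\D$, as noted after \eqref{pi}), applying the mean value inequality along segments contained in $\D$ shows that $\pi$ is an $L$-contraction, and Banach's fixed point theorem yields a unique fixed point; together with step (i), $\Fix(\pi) = \{x^*\}$.

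Finally, for step (iii), since $\Fix(\pi)$ is finite, Theorem \ref{theorem1} gives $\Prb\big(\lim_{n\to\infty}X(n) = x^*\big) = \sum_{x\in\Fix(\pi)}\Prb\big(\lim_{n\to\infty}X(n) = x\big) = 1$, which is the assertion. I do not expect a substantive obstacle: the probabilistic content is already contained in Theorem \ref{theorem1}, and the only point requiring care is the combinatorial bookkeeping in the row-sum bound, where the identity $\edges = d(d-1)/2$ is precisely what turns the raw estimate into the threshold $|\alpha| < d/\big(4(c-1)\sup_t|\varphi'(t)|\big)$.
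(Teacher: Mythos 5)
Your proposal is correct and follows essentially the same route as the paper: verify $x^*=(\tfrac1d,\ldots,\tfrac1d)$ is fixed using $\varphi(0)=\tfrac12$, bound the Jacobian of $\pi$ to show it is a contraction on the compact convex set $\D$ under the stated bound on $\vert\alpha\vert$, invoke Banach's fixed point theorem for uniqueness, and then apply Theorem~\ref{theorem1}. The only difference is cosmetic: you use the $\ell_\infty$ operator norm (row sums) whereas the paper uses the $\ell_1$ operator norm (column sums); by the symmetry of the entries of $J\pi$ both give the same threshold $4(c-1)\vert\alpha\vert\sup_{t}\vert\varphi'(t)\vert/d<1$.
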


In what follows, we denote by $\rho(A) = \max\,\{\vert\lambda\vert:
\lambda\,\, \textrm{is eigenvalue of $A$}\}$ the spectral radius of a
square matrix $A$. Given $x\in\Delta^c$, we let $J\pi(x)$ denote the
Jacobian matrix of the map $\pi$ at $x$. We say that a fixed-point $x$
of $\pi$ is stable if $\rho\big(J\pi(x)\big)<1$. If $\alpha$ has large
absolute value, then $\pi$ may have multiple stable fixed points, as
the following example shows.

\begin{example}\label{example1}
Let $V=\{1,2,3\}$, $C=\{1,2\}$, $\alpha=\frac{615}{182}\ln 9$,
$\varphi(t)=1/(1+e^{-t})$, $t\in\mathbb{R}$, and $\pi$ be the map
defined in \eqref{pi}. Then $ \textrm{Fix}\,(\pi)$ contains the stable
fixed points:
\begin{equation}\label{eqn:example_fp}
 \begin{aligned}
 \textstyle\left(\frac{23}{615}, \frac{1}{3},\frac{129}{205},\frac{23}{615},
    \frac{1}{3},\frac{129}{205}  \right), &
&\textstyle\left( \frac{23}{615}, \frac{129}{205}, \frac{1}{3},
  \frac{23}{615},\frac{129}{205}, \frac{1}{3}  \right), &
&\textstyle\left(\frac13, \frac{23}{615},\frac{129}{205}, \frac13,
  \frac{23}{615},\frac{129}{205}  \right) \\[0.1in]
 \textstyle\left( \frac{129}{205}, \frac{1}{3},\frac{23}{615},\frac{129}{205},
  \frac{1}{3},\frac{23}{615}  \right), &
&\textstyle\left( \frac{129}{205}, \frac{23}{615},\frac13, \frac{129}{205},
  \frac{23}{615}, \frac13  \right), &
&\textstyle\left(\frac13,  \frac{129}{205}, \frac{23}{615},\frac{1}{3},
  \frac{129}{205},\frac{23}{615}\right)
 \end{aligned}
\end{equation}

Figure~\ref{fig:triangles} presents six sample paths for a single
colour of the process $X=\{X(n)\}_{n\ge 0}$ started at $X(0) =
\left(\frac13,\frac13,\frac13,\frac13,\frac13,\frac13 \right)$. The
figure includes simulations for two different reinforcement strengths,
$\alpha=\frac34$ and $\alpha = ({615}/{182})\ln 9$. Open circles are
located at the fixed points of $\pi$. In the later case, the six
non-central circles are located at the points with coordinates
determined by \eqref{eqn:example_fp}. Continuous light curves
represent the flow of the ODE in \eqref{eqn:THE_field}. These
simulations  are consistent with the general result stated
in Theorem~\ref{theorem1} and also with Corollary~\ref{corollary1}.
\begin{figure}[h!]
  \begin{tabular}{cc}
     \includegraphics[width=6cm]{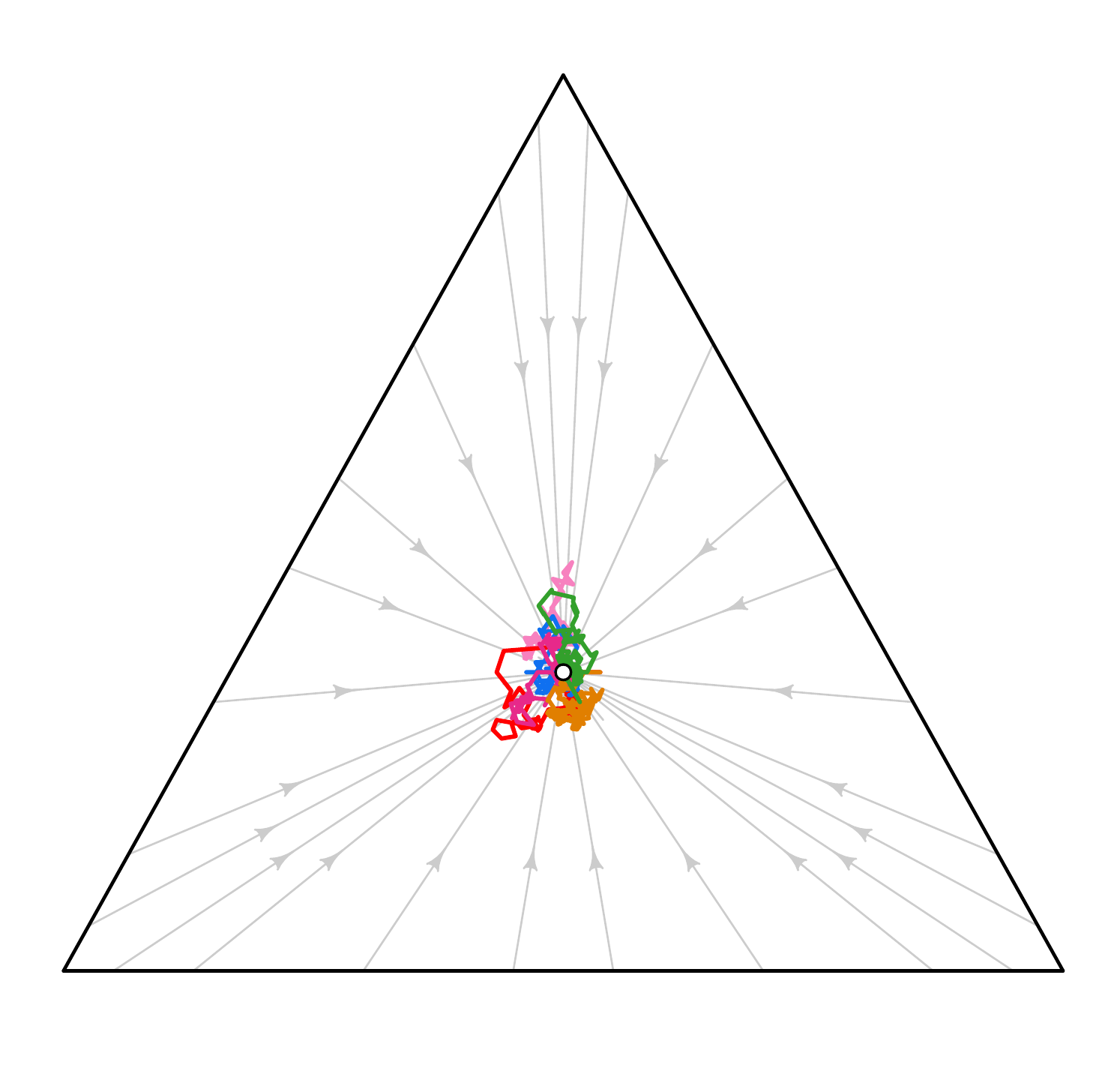}
    &
     \includegraphics[width=6cm]{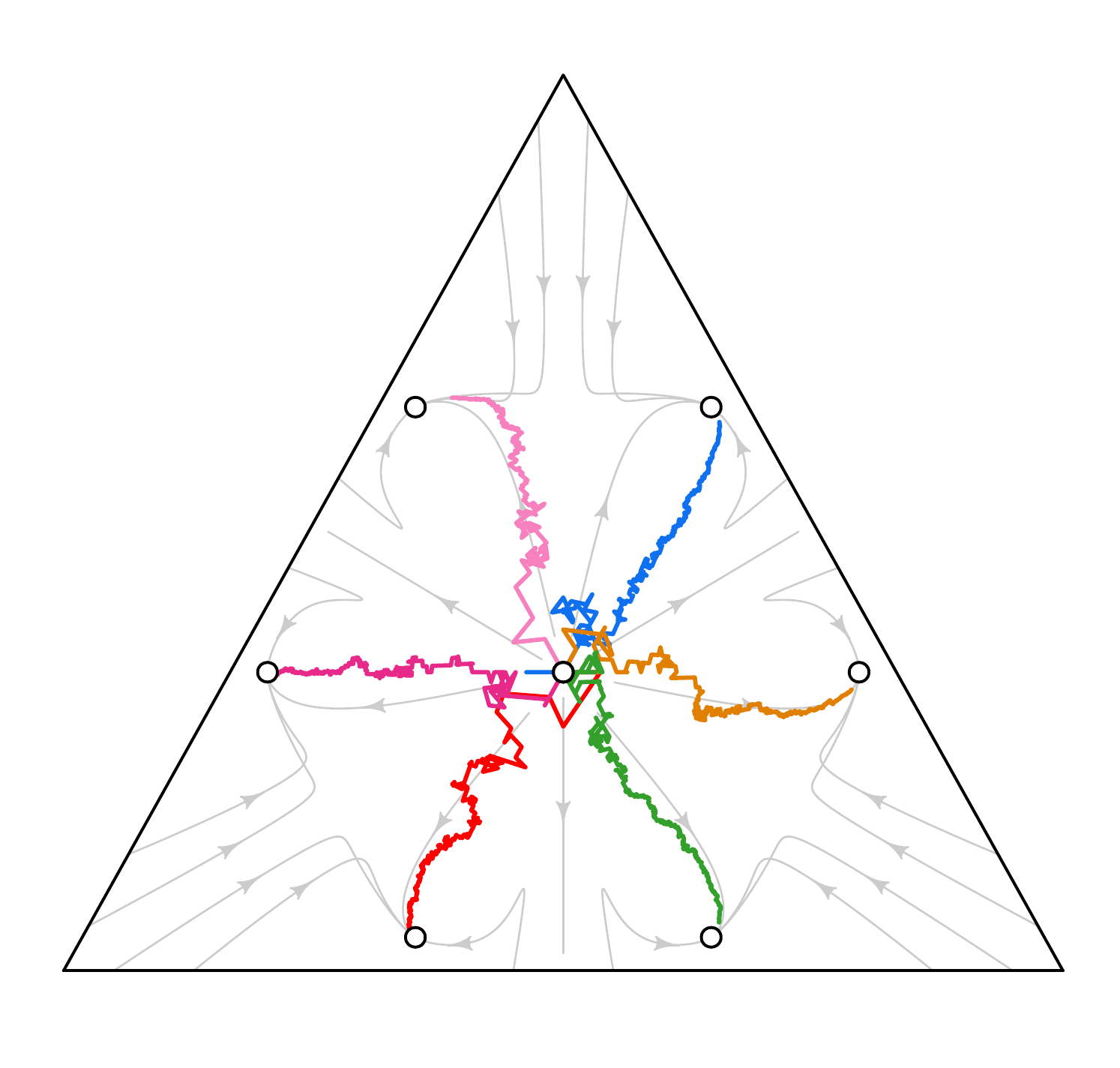}
    \\ 
    $\alpha = \frac34$ & $\alpha = \frac{615}{182}\ln 9$ 
  \end{tabular}
  \caption{Sample paths for a single colour and different
    reinforcement parameters $\alpha$.}\label{fig:triangles}
\end{figure}
\end{example}

\section{Proof of Theorem~\ref{theorem1}}\label{sec:edo}
The almost sure convergence of $X$ is established by using the ODE
method for stochastic approximations (see Appendix A). This technique
shows that the limit set of the process $X$ is almost surely connected
and contained in the chain-recurrent set (see Definition~\ref{dcs}) of
the ordinary differential equation
 \begin{equation}\label{eqn:THE_field}
  \dot{x} = F(x):= -x + \pi(x), \quad x\in\D.
\end{equation}
If $\pi$ has finitely many fixed points, then the chain-recurrent
set of \eqref{eqn:THE_field} equals the set of fixed points of $\pi$
(see Corollary \ref{cor1}). To achieve this result, we construct a
strict Lyapunov function for \eqref{eqn:THE_field} in terms of energy
functions of Hopfield's Neural Networks.

We will need some notions from the theory of ordinary differential
equations. The function $F$ in \eqref{eqn:THE_field} is defined on the
entire $\mathbb{R}^{dc}$. By \ref{deriv_Hyp} and \ref{odd_Hyp}, $F$ is
a smooth map with bounded partial derivatives. Under such conditions,
there exists a semi-flow associated to \eqref{eqn:THE_field} and
denoted by $\phi_t:\mathbb{R}^{dc}\to\mathbb{R}^{dc}$, $t\ge 0$. A set
$S\subset\mathbb{R}^{dc}$ is called \textit{invariant} by
\eqref{eqn:THE_field} if $\phi_t\big(\myx\big)\in S$ for all $t\ge 0$
and $\myx\in S$. A point $\myx\in\mathbb{R}^{dc}$ is an
\textit{equilibrium point} if $\phi_t(\myx)=\myx$ for all $t\ge0$. We
denote by $\Eq$ the set of equilibrium points of
\eqref{eqn:THE_field}.

\begin{definition}[strict Lyapunov function]\label{slf} A continuous
function $L:S\to\mathbb{R}$ is a \emph{strict Lyapunov function} for
\eqref{eqn:THE_field} on an invariant set $S\subset\mathbb{R}^{dc}$ if
the function \mbox{$t\in [0,\infty)\mapsto L\big(\phi_t(\myx)\big)$}
is strictly decreasing for all ${x}\in S{\setminus} \Eq$.
\end{definition} 

\begin{theorem}\label{theorem2}  {The following statements concerning
the differential equation \eqref{eqn:THE_field} hold:}
\begin{itemize}[noitemsep]
\item [$(i)$] $\D\subset \mathbb{R}^{dc}$ is an invariant set; \item
[$(\mathit{ii})$] There exists a strict Lyapunov function $L:\D\to
\mathbb{R}$ on $\D$.
\end{itemize}
\end{theorem}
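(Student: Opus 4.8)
\emph{Part $(i)$.} The plan is to check the two standard conditions guaranteeing forward invariance of the polytope $\D$ under the semiflow and then invoke a standard flow-invariance criterion (of Nagumo type). Fixing a colour $i$ and summing the $i$-th block of $F=-x+\pi(x)$ over vertices, one uses that $\G$ is complete — so the ordered pairs $(u,w)$ with $u\neq w$ split into $\vert E\vert=\binom{d}{2}$ unordered edges — together with \ref{odd_Hyp} to get
\[
\sum_{u\in V}\pi_u^i(x)=\frac1{\vert E\vert}\sum_{\{u,w\}\in E}\Big[\varphi\big(\alpha(h_u^i-h_w^i)\big)+\varphi\big(\alpha(h_w^i-h_u^i)\big)\Big]=1,\qquad h_v^i:=\sum_{k\in C\setminus\{i\}}x_v^k;
\]
hence $\sum_u F_u^i(x)=-\sum_u x_u^i+1$, so each constraint $\sum_u x_u^i=1$ is preserved along the flow. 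On a facet $\{x_u^i=0\}$ of $\D$ one has $F_u^i(x)=\pi_u^i(x)>0$ since $\varphi>0$, so the field strictly enters $\D$ there; the two facts together give $\phi_t(\D)\subseteq\D$ for every $t\geq 0$ (if some coordinate first reached $0$ at a time $t_0$, its derivative there would be strictly positive, a contradiction).

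\emph{Part $(ii)$: the Hopfield structure.} The key observation is that $\pi$ factors through the symmetric linear map $\sL\colon\mathbb{R}^{dc}\to\mathbb{R}^{dc}$, $(\sL x)_u^i=h_u^i$ (symmetric because within each vertex it is the matrix $J_c-I_c$), namely $\pi(x)=\rho(\sL x)$ with $\rho(y)_u^i=\tfrac1{\vert E\vert}\sum_{w\neq u}\varphi\big(\alpha(y_u^i-y_w^i)\big)$, and that $\rho$ is itself a gradient: the same use of the completeness of $\G$ and of \ref{odd_Hyp} gives $\rho=\nabla\Psi$ for
\[
\Psi(y)=\sum_{i\in C}\bigg[\frac1{2\alpha\vert E\vert}\sum_{u\neq w}\Phi_\alpha\big(y_u^i-y_w^i\big)+\frac1d\sum_{u\in V}y_u^i\bigg],\qquad \Phi_\alpha(t):=\int_0^{\alpha t}\varphi(s)\,ds,
\]
the linear term absorbing the constant produced by \ref{odd_Hyp}. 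Assume for definiteness $\alpha>0$ (for $\alpha<0$ the argument below runs with $\Psi$ concave and the evident sign reversals); then $\Psi$ is convex, since $\Phi_\alpha''(t)=\alpha^2\varphi'(\alpha t)>0$ by \ref{deriv_Hyp}. Thus \eqref{eqn:THE_field} reads $\dot x=-x+\nabla\Psi(\sL x)$ with $\sL$ symmetric — precisely a continuous Hopfield network with symmetric couplings.

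\emph{Part $(ii)$: the Lyapunov function.} Following the Hopfield recipe I would take $L(x):=\Psi^*(x)-\tfrac12\langle x,\sL x\rangle$ with $\Psi^*$ the convex conjugate of $\Psi$. Writing $q=q(x):=\nabla\Psi^*(x)$, so $\nabla\Psi(q)=x$, we have $\nabla L(x)=q-\sL x$ and $F(x)=-x+\nabla\Psi(\sL x)=\nabla\Psi(\sL x)-\nabla\Psi(q)$, hence along the semiflow
\[
\frac{d}{dt}L\big(\phi_t(x)\big)=\big\langle\, q-\sL x,\ \nabla\Psi(\sL x)-\nabla\Psi(q)\,\big\rangle=-\big\langle\,\sL x-q,\ \nabla\Psi(\sL x)-\nabla\Psi(q)\,\big\rangle\le 0
\]
by monotonicity of $\nabla\Psi$ (convexity of $\Psi$); $F$ being tangent to the affine hull of $\D$, shown in $(i)$, lets one work with the intrinsic gradient of $L$ on that hull. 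If this derivative vanishes then the Bregman divergence $D_\Psi(\sL x,q)$ is zero, so $\sL x$ minimises the convex nonnegative function $z\mapsto\Psi(z)-\Psi(q)-\langle\nabla\Psi(q),z-q\rangle$, which forces $\nabla\Psi(\sL x)=\nabla\Psi(q)=x$, i.e.\ $F(x)=0$, i.e.\ $x\in\Eq$. As a non-equilibrium orbit of the smooth field $F$ never meets $\Eq$, the map $t\mapsto L(\phi_t(x))$ is strictly decreasing for every $x\in\D\setminus\Eq$, which is $(ii)$.

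\emph{The main obstacle.} Two points carry the argument: recognising that the completeness of $\G$ together with \ref{odd_Hyp} is exactly what makes $\pi$ a symmetric-coupling Hopfield field, and — the genuinely delicate point — verifying that $L$ is real-valued and $C^1$ where it is used. Since $\Psi$ is not strictly convex (its Hessian in each colour block is a weighted graph Laplacian, degenerate along the all-ones direction), $\Psi^*$, and hence $L$, is finite only on a sub-polytope of $\D$. I would handle this by showing that $\pi(\D)$ lies in the relative interior of $\operatorname{dom}\Psi^*$ and that $t\mapsto\operatorname{dist}\!\big(\phi_t(x),\operatorname{conv}\pi(\D)\big)$ decays exponentially — the field points into that convex set — so that every trajectory is absorbed, after a finite time, into a forward-invariant compact neighbourhood of $\Fix(\pi)$ contained in $\operatorname{dom}\Psi^*$, on which $L$ is a genuine strict Lyapunov function; this suffices for the chain-recurrence analysis that follows. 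Once convexity and this regularity are in hand, the monotonicity estimate displayed above does the rest.
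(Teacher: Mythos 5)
Your part $(i)$ is correct and is essentially the paper's own argument (sum $\pi_u^i$ over $u$ using completeness and \ref{odd_Hyp} to preserve the affine constraints, positivity of $\varphi$ on the facets). For part $(\mathit{ii})$ your structural observation is genuinely different from the paper and is, as far as it goes, correct: $\pi=\nabla\Psi\circ\sL$ with $\sL$ symmetric, your potential $\Psi$ does have gradient $\rho$ (the constant $1/d$ exactly absorbs the term produced by \ref{odd_Hyp}), and the monotonicity computation for $L=\Psi^*-\tfrac12\langle x,\sL x\rangle$, together with the Bregman argument for strictness, is formally sound. The paper instead changes variables to $z_{iuv}=\sum_{k\neq i}(\alpha x_u^k-\alpha x_v^k)$, obtains a literal Hopfield system on $\R^{c\vert E\vert}$ with symmetric couplings, and pulls back Hopfield's globally defined, smooth energy by that linear map; the only subtle point there is that a non-constant solution $x(t)$ must give a non-constant $z(t)$ (the linear map has a kernel), which the paper settles by explicitly integrating the uncoupled equations and using $\sum_u x_u^i\equiv 1$.

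The genuine gap is the one you flag yourself but do not close. The theorem asserts a continuous, real-valued strict Lyapunov function on all of $\D$, and your $L$ is not that: since $\Psi$ has linear growth, $\operatorname{dom}\Psi^*$ is contained in $\{x:\ 0\le x_u^i\le 2/d,\ \sum_u x_u^i=1\}$, a proper subset of $\D$ as soon as $d\ge 3$, so $L=+\infty$ on a large part of $\D$ (e.g.\ at every vertex of $\D$). Your proposed repair is only a plan, and it changes what is being proved: you would obtain a Lyapunov function on a forward-invariant absorbing set rather than on $\D$, and you would then have to redo the downstream step (Corollary 3 / Bena\"im's Proposition 3.2, which is applied to a Lyapunov function on the invariant set $\D$) to see that this weaker object still pins the chain-recurrent set to $\Fix(\pi)$. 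Moreover the unproved steps of the plan are not routine: because $\Psi$ is affine along the all-ones direction in each colour block, $\Psi^*$ is nowhere differentiable in the ambient sense and $\partial\Psi^*$ is set-valued, so you must work with the intrinsic gradient on the affine hull and verify that the orbit is absorbed, in finite time, into a compact set on which $\Psi^*$ is finite and relatively differentiable (it is not enough that the orbit approaches $\operatorname{conv}\pi(\D)$ exponentially fast, since boundary behaviour of $\operatorname{dom}\Psi^*$ and of the range of $\nabla\Psi$ has to be controlled); the case $\alpha<0$ ("evident sign reversals") is also left entirely implicit. As written, then, the proposal does not prove Theorem 2$(\mathit{ii})$; either carry out this absorption-plus-regularity programme and adapt the chain-recurrence argument, or switch to the paper's device of pulling back a globally defined Hopfield energy through the linear change of variables, where the only remaining task is the non-constancy lemma.
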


\begin{proof}[Proof of Theorem~\ref{theorem2}.$(i)$] Let $\myx=(x_u^i)\in\Delta^{\!
c}$, that is, assume that $x_u^i\ge 0$ for all $u\in V$, $i\in C$, and
$\sum_{u\in V} x_u^i=1$ for all $i\in C$. By \eqref{pi} and
\ref{odd_Hyp}, since $\G=(V,E)$ is an undirected complete graph, we
have that
\begin{align*}
  \sum_{u\in V} \pi_u^i(\myx)
  =
    \dfrac{1}{\edges} \sum_{\substack{u, w\in V\\ u < w}}
    \Bigg[&\varphi\Bigg(\sum_{k\in C{\setminus}\{i\}}  \Big(\alpha
    x_{u}^k - \alpha x_{w}^k\Big)\Bigg) \\
  +\ \  &\varphi\Bigg(
  \sum_{k\in C{\setminus}\{i\}}  \Big(\alpha x_{w}^k -  \alpha 
    x_{u}^k\Big)\Bigg)\Bigg]                     
  =
   {\dfrac{1}{\edges}\, \edges=1}.
\end{align*} 
In this way, $\pi(\myx)$ takes $\Delta^{\! c}$ into its interior
because $\pi_u^i(\myx)>0$ for all $u,i $, and $\sum_{u\in V}
\pi_u^i(\myx)=1$ for all $i\in C$. In particular, if $\myx$ is a point
in the boundary of $\Delta^{\! c}$, we have that $F(\myx) = - \myx +
\pi(\myx)$ is the displacement vector from $\myx\in\partial{\Delta^{\!
c}}$ to the point $\pi\big({\myx}\big)$ in the interior of $\Delta^{\!
c}$. Hence, by the convexity of $\Delta^{\! c}$, we have that
$F(\myx)$ points towards the interior of $\Delta^{\! c}$. As a result,
$\Delta^{\! c}$ is invariant by \eqref{eqn:THE_field}.
\end{proof}

\begin{proof}[Proof of Theorem~\ref{theorem2}.$(\mathit{ii})$]
  Introducing the new variables  $x_{uv}^i=\alpha
x_{u}^i- \alpha x_{v}^i$, $i\in C,\, u,v \in V$, $u\neq v$, in
\eqref{eqn:THE_field} leads to the system of $c \cdot \edges$
differential equations
\begin{equation}\label{eq-2a} 
\dot{x}_{uv}^i = -x_{uv}^i +\dfrac{\alpha }{\edges}\sum_{w\in
  V{\setminus}\{u\}}  \varphi\left(  \sum_{{k\in C{\setminus}\{i\}}}
  x_{uw}^k  \right) -  \dfrac{\alpha }{\edges}\sum_{w\in
  V{\setminus}\{v\}}  \varphi\left(  \sum_{k\in C{\setminus}\{i\}}
  x_{vw}^k  \right).
\end{equation}
By using \ref{odd_Hyp}, we may rewrite \eqref{eq-2a} in the form 
\begin{equation}\label{eq-2b}
\begin{aligned}
  \dot{x}_{uv}^i
  =&
    -x_{uv}^i +\dfrac{\alpha}{\edges}\sum_{w\in
  V{\setminus}\{u\}}  \varphi\left(  \sum_{{k\in C{\setminus}\{i\}}}
  x_{uw}^k  \right) + \dfrac{\alpha}{\edges}\sum_{w\in
  V{\setminus}\{v\}}  \varphi\left(  \sum_{k\in C{\setminus}\{i\}}
    x_{wv}^k  \right) \\
    &- \dfrac{\alpha}{\edges}\sum_{w\in V{\setminus}\{v\}} 1.
\end{aligned}
\end{equation}
Setting $\myx_{uv} = \left[ x_{uv} ^1, x_{uv}^2,\ldots,
  x_{uv}^c\right]^{t}$, $u\ne v$, and rewriting
\eqref{eq-2b} in vectorial form  leads to
\begin{equation}\label{eq-vf}
\begin{aligned}
\dot{\myx}_{uv} 
=& - \myx_{uv} +
\dfrac{\alpha}{\edges}\sum_{w\in V{\setminus}\{u\}} {\Phi}(B
\myx_{uw})   + \dfrac{\alpha}{\edges}\sum_{w\in
  V{\setminus}\{v\}} {\Phi}(B \myx_{wv})                          \\
&- \dfrac{\alpha}{\edges}\sum_{w\in V{\setminus}\{v\}}\mathbf{1}, 
\qquad  u\neq v,
\end{aligned} 
\end{equation}
where $\mathbf{1} = [1,1,\ldots,1]^t$, $B = (b_{ij})$ is the
non-singular $c\times c$ matrix with entries $b_{ij}= 1 - \delta_{ij}$,
where $\delta_{ij} = 1$ if $i=j$ and $\delta_{ij} = 0$ otherwise, and
${\Phi}:\R^c\to\R^c$ is the map defined by $\Phi([p_1, p_2, \ldots,
p_c]^t) = [\varphi(p_1), \varphi(p_2), \ldots, \varphi(p_c)]^t$.
Introducing the new variables $\myy_{uv}= \left[ y_{uv} ^1,
y_{uv}^2,\ldots, y_{uv}^c\right]^{t} = B \myx_{uv}$, $u\neq v$, in
\eqref{eq-vf} yields
\begin{equation}\label{yvec}
 \dot{\myy}_{uv} = -\myy_{uv} +
 \dfrac{\alpha}{\edges}\sum_{w\in V{\setminus}\{u\}} B{\Phi}(  \myy_{uw})
 + 
\dfrac{\alpha}{\edges}\sum_{w\in V{\setminus}\{v\}}
B{\Phi}( \myy_{wv}) -\dfrac{\alpha}{\edges}
\sum_{w\in V{\setminus}\{v\}}B\mathbf{1}.  
\end{equation}
After rewriting \eqref{yvec} in the scalar form, we obtain 
\begin{equation}\label{ys}
\begin{aligned}
\dot{y}_{uv}^i 
  =& 
  - y_{uv}^i + \dfrac{\alpha}{\edges} \sum_{w\in
  V{\setminus}\{u\}}\sum_{k\in C{\setminus}\{i\}}  
  \varphi\big(y_{uw}^k\big) + \dfrac{\alpha}{\edges} 
  \sum_{w\in V{\setminus}\{v\}}\sum_{k\in
  C{\setminus}\{i\}} \varphi\big(y_{wv}^k\big)        \\
  &- \dfrac{\alpha}{\edges} (d - 1 )(c-1).
\end{aligned}
\end{equation}
Again, let us introduce the new variables 
\begin{equation}\label{eq-H}
z_{iuv} = y_{uv}^i, \quad i\in C,\quad u,v\in V,\quad u\neq v.
\end{equation}
Then we may rewrite \eqref{ys} in the form
\begin{equation}\label{eq-H2}
\dot{z}_{iuv} = -z_{iuv} + \sum_{(j,r,s)\in\Lambda} w_{iuv}^{jrs}\, \varphi(z_{jrs})  + m
\end{equation}
where
\begin{equation}\label{eq-W}
\begin{gathered}
\Lambda=\{(i,r,s): i\in C,\, r,s\in V, \,r\neq s\}, \\[.75em]
  m
  =
  - \dfrac{\alpha}{\edges} (d-1 )(c-1),\quad
  w_{iuv}^{jrs}
  = \begin{cases}
      \dfrac{2\alpha}{\edges}
      &\textrm{if} \quad j\neq i, \,r = u, \,s = v\\[0.1in]
      \dfrac{\alpha}{\edges}
      &\textrm{if} \quad j\neq i, \,r = u, \,s\neq v  \\[0.1in]
      \dfrac{\alpha}{\edges}
      & \textrm{if}\quad  j\neq i, \,r\neq u, \,s = v  \\[0.1in]
      \phantom{1}0
      &\textrm{if}\quad j\neq i, \,r\neq u, \,s\neq v\\[0.1in]
      \phantom{1}0 & \textrm{if}\quad j=i
    \end{cases}
\end{gathered}
\end{equation}

It is clear that the multidimensional matrix $W =
\big(w_{iuv}^{jrs}\big)$ is symmetric in the sense that $w_{iuv}^{jrs}
= w_{jrs}^{iuv}$ for all $(i,u,v)\in \Lambda$, $(j,r,s)\in \Lambda$.

The system of equations \eqref{eq-H2} is analogous to the one
governing the dynamics of a Hopfield network. Provided that
$W=\big(w_{iuv}^{jrs}\big)$ is symmetric, \cite[p. 2090, Equation
(7)]{WOS:A1984SU47600030} proved (see Appendix B) that the map $L:\mathbb{R}^{c\vert E\vert}
\to\R$ defined by
\begin{align*}
  L ( {z})
  =
  -\frac12\sum_{(i,u,v)\in\Lambda}
    &\sum_{(j,r,s)\in\Lambda} w_{iuv}^{jrs}\, 
    \varphi\big(z_{iuv}\big) \varphi\big(z_{jrs}\big)\\
   &+
    \sum_{(i,u,v)\in\Lambda} \int^{\varphi(z_{iuv})}
  \varphi^{-1}(\eta) \mathrm{d}\eta  +
 \sum_{(i,u,v)\in\Lambda}   m \varphi(z_{iuv}), 
\end{align*}
is strict Lyapunov function for \eqref{eq-H2}. In this way, if $z: t\in [0,\infty)\mapsto z(t)\in\mathbb{R}^{c\vert E\vert} $ is a non-constant solution of  \eqref{eq-H2}, then 
the function $t\in [0,\infty)\mapsto L\big(z(t)\big)$ is strictly decreasing. Now we will explain how to translate such a result about the solutions of \eqref{eq-H2} into a result about the solutions of \eqref{eqn:THE_field}. Let $\Psi: \mathbb{R}^{dc}\to\mathbb{R}^{c\vert E\vert}$ be the map that at $x=(x_u^i)\in\mathbb{R}^{dc}$ takes the value
$z=(z_{iuv})\in\mathbb{R}^{c\vert E\vert}$, where
\begin{equation}\label{Psi} 
z = \Psi(x),\quad z_{iuv} =  \sum_{k\in C{\setminus}\{i\}} \big(\alpha x_u^k -\alpha
  x_v^k\big),\quad i\in C, \quad u,v\in V, \quad u\neq v.
\end{equation}
It follows from all the change of variables made within this section that if $x: t\in [0,\infty)\mapsto x(t)\in\mathbb{R}^{dc}$ is a solution of \eqref{eq-H2} then
$z:[0,\infty)\to \mathbb{R}^{c\vert E\vert}$ defined by $z(t)=\Psi\big(x(t)\big), t\in\mathbb{R}$, is a solution of \eqref{eq-H2}. To show that $L\circ\Psi$ is a Lyapunov function for    \eqref{eqn:THE_field} on $\Delta^c$, we will show that if $x: t\in [0,\infty)\mapsto x(t)\in \Delta^c$ is a non-constant solution of  \eqref{eqn:THE_field} on $\Delta^c$, then
$t\in [0,\infty)\mapsto L\Big(\Psi\big(x(t)\big)\Big)$ is strictly decreasing. To proceed with that reasoning, let $x: t\in [0,\infty)\mapsto x(t)\in \Delta^c$ be a non-constant solution of  $\eqref{eqn:THE_field}$ and let $z:t\in [0,\infty)\mapsto z(t)=\Psi\big(x(t)\big)$ be the corresponding solution of \eqref{eq-H2}. The proof will be finished if we show that
$z$ is non-constant because in that case, as already remarked, the function $t\in [0,\infty)\mapsto L\big(z(t)\big) = L\Big(\Psi\big(x(t)\big)\Big) $ is strictly decreasing. By way of contradiction, assume that $z$ is constant. By \eqref{Psi} and \eqref{pi}, there exist $b_{u}^i\in\mathbb{R}$, $i\in C$, $u\in V$, such that
\begin{equation}\label{bui} 
b_{u}^i = \pi_u^i\big(x(t)\big),\quad   t\ge 0, \quad i\in C, \quad u\in V.
\end{equation}
In this way, $b:=(b_u^i) = \pi \big(x(t)\big)\in\Delta^c$. In particular, we have that
\begin{equation}\label{b40}
\sum_{u\in V} b_u^i =1, \quad \forall i\in C. 
\end{equation}
By  \eqref{eqn:THE_field} and \eqref{bui}, we obtain that $x:t\in [0,\infty)\mapsto x(t)\in\mathbb{R}^{dc}$ satisfies the uncoupled system of differential equations:
$$
  \dot{x}_u^i(t) =  -x_u^i(t) + b_u^i, \quad   t\ge 0, \quad   i\in C, \quad   u\in V.
$$  
Applying integrating factor to the previous ode, we conclude that there exist $a_u^i\in\mathbb{R}$, $i\in C$, $u\in V$ such that
\begin{equation}\label{xuixui}
x_u^i(t) = a_u^i e^{-t} + b_u^i, \quad t\ge 0, \quad i\in C,\quad u\in V.
\end{equation}
Now given,  $u,v\in V$ with $u\neq v$, let $\myx_{uv}(t) = \left[ x_{uv} ^1(t), x_{uv}^2(t),\ldots,
  x_{uv}^c(t)\right]^{t}$, where $x_{uv}^i(t) = \alpha x_u^i(t) - x_v^i(t)$. Since, by hypothesis, $z$ is constant, we have that $\dot{z}(t)=0$ for all $t\ge 0$. Hence, by \eqref{Psi}, we have that
$$
0 = \dot{z}_{iuv}(t)= \sum_{k\in C{\setminus}\{i\}} \big(\alpha \dot{x}_u^k(t) -\alpha
  \dot{x}_v^k(t)\big) = \left[B \dot{x}_{uv}(t)\right]_i, \quad t\ge 0, \quad  i\in C.
$$
That is, $B \dot{x}_{uv}(t)=0$ for all $t\ge 0$. Since $B$ is non-singular, we have that $\dot{x}_{uv}(t)=0$ for all $t\ge 0$ or, equivalently, 
\begin{equation}\label{0111}
\dot{x}_u^i(t) = \dot{x}_v^i(t), \quad   t\ge 0,\quad   i\in C, \quad   u,v\in V, \quad u\neq v.
\end{equation}
Derivating \eqref{xuixui} and using \eqref{0111} imply that that there exist $a^i$, $i\in C$, such that
$$
x_u^i(t) = a^i e^{-t} + b_u^i, \quad t\ge 0, \quad i\in C,\quad u\in V.
$$
Summing over $u\in V$ and using \eqref{b40} yield
$$ 1 = a^i e^{-t} + 1, \quad \forall t\ge 0,\quad \forall i\in C,
$$
which is a contradiction. In fact, since $x$ is a non-constant solution, we have that $a^i\neq 0$ for at least one $i\in C$.
\end{proof}

\begin{definition}[chain-recurrent set]\label{dcs} Let $\delta >0$ and
$T>0$. A $(\delta, T)$-pseudo orbit of \eqref{eqn:THE_field} from $x
\in \D$ to $y \in \D$ is a finite sequence of partial orbits
$\{\phi_t(y_i) : 0 \leq t \leq t_i\}$; $i=0, \ldots, k-1$; $t_i\geq T$
of \eqref{eqn:THE_field} such that
$$\|y_0 - x\| < \delta, \quad \|\phi_{t_i}(y_i) - y_{i+1}\| <
\delta, \,\, i=0, \ldots, k-1,\quad \textrm{and} \quad y_k = y.$$ A
point $x \in \D$ is chain-recurrent if for every $\delta>0$ and $T>0$
there is a $(\delta, T)$-pseudo orbit from $x$ to itself. The
chain-recurrent set of \eqref{eqn:THE_field},  {denoted by $\CR$,}  is
the set of all chain-recurrent points.
\end{definition}

\begin{corollary}\label{cor1} The chain-recurrent set $\CR$ of
\eqref{eqn:THE_field} equals the set of fixed points of the map $\pi$.
\end{corollary}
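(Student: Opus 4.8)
The plan is to derive the corollary from Theorem~\ref{theorem2} together with the classical Lyapunov/Conley theory for the semiflow of \eqref{eqn:THE_field} on the compact invariant set $\D$. First I would note that $x$ is an equilibrium of \eqref{eqn:THE_field} exactly when $F(x)=-x+\pi(x)=0$, i.e.\ exactly when $\pi(x)=x$; since the identity $\sum_{u}\pi_u^i\equiv 1$ established in the proof of Theorem~\ref{theorem2}$(i)$ holds for every $x\in\mathbb{R}^{dc}$ and $\pi_u^i>0$, the map $\pi$ sends $\mathbb{R}^{dc}$ into the relative interior of $\D$, so $\Eq=\Fix(\pi)\subset\D$. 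Next I would record three ingredients: $\D$ is compact (a product of $c$ standard $(d-1)$-simplices) and positively invariant, by Theorem~\ref{theorem2}$(i)$; the function $L:\D\to\mathbb{R}$ provided by Theorem~\ref{theorem2}$(ii)$ is a strict Lyapunov function on $\D$ in the sense of Definition~\ref{slf}; and, as $\Fix(\pi)$ is finite by hypothesis, $L(\Eq)$ is a finite, hence nowhere dense, subset of $\mathbb{R}$. It then remains to prove $\CR=\Eq$.

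The inclusion $\Eq\subseteq\CR$ is immediate: for $x\in\Eq$ the constant orbit at $x$ is a $(\delta,T)$-pseudo orbit from $x$ to itself for every $\delta,T>0$ (Definition~\ref{dcs}). For the converse, fix $p\in\CR$ and suppose, for a contradiction, that $p\notin\Eq$. By compactness of $\D$ and monotonicity of $t\mapsto L(\phi_t(p))$ the limit $\ell:=\lim_{t\to\infty}L(\phi_t(p))$ exists and $L\equiv\ell$ on the nonempty invariant $\omega$-limit set $\omega(p)$; since a strict Lyapunov function is non-constant along every non-constant orbit, invariance of $\omega(p)$ forces $\omega(p)\subseteq\Eq$, whence $\ell\in L(\Eq)$. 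As $p\notin\Eq$, the map $t\mapsto L(\phi_t(p))$ is strictly decreasing, so $L(p)>\ell$, and since $L(\Eq)$ is nowhere dense I would pick $c$ with $\ell<c<L(p)$ and $c\notin L(\Eq)$. Then no point of $\{L=c\}$ is an equilibrium, so $L$ decreases strictly through that level; together with monotonicity this yields $\phi_t(B)\subseteq\{L<c\}$ for all $t>0$, where $B:=\{x\in\D:L(x)\le c\}$ is a compact positively invariant attracting neighbourhood with $p\notin B$. Its attractor $A:=\bigcap_{t\ge 0}\phi_t(B)$ is nonempty, invariant, and contained in $\{L<c\}$; hence the invariant set $\omega(p)$ --- which lies in $B$ since $\ell<c$ --- is contained in $A$, so $\phi_t(p)\to A$, while $L(p)>c>\max_A L$ shows $p\notin A$.

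It remains to contradict $p\in\CR$ using that $p$ is attracted to $A$ yet $p\notin B$. Since $\phi_t(B)\subseteq\mathrm{int}(B)$ for $t>0$, a routine compactness argument provides $\tau,\delta_0>0$ with $N_{\delta_0}(\phi_t(B))\subseteq B$ for all $t\ge\tau$; choosing $t_1\ge\tau$ with $\phi_{t_1}(p)$ lying well inside $\{L<c\}$, one checks that for $\delta<\delta_0$ and $T\ge t_1$ every $(\delta,T)$-pseudo orbit issuing $\delta$-close to $p$ is trapped in $B$ from its first jump onwards and therefore cannot terminate at $p\notin B$. This contradicts $p\in\CR$, forces $L(p)=\ell$, and hence $p\in\Eq$; thus $\CR\subseteq\Eq$, and with $\Eq=\Fix(\pi)$ we obtain $\CR=\Fix(\pi)$. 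I expect this trapping step to be the only genuinely delicate point --- it is the classical assertion that a point of $B(A)\setminus A$ is never chain recurrent, and the subtlety is to rule out pseudo orbits that dip below the level $c$ and then climb back above $L(p)$ through many small jumps --- but since it is a standard consequence of having a strict Lyapunov function whose value set on $\Eq$ is nowhere dense, one may alternatively quote the corresponding packaged statement from the Conley/stochastic-approximation toolkit referenced in Appendix~A. All the substantive work has already been done in Theorem~\ref{theorem2}.
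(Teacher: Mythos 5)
Your proposal is correct, and its skeleton coincides with the paper's: identify $\Eq=\Fix(\pi)$, observe $\Eq\subseteq\CR$ trivially, use finiteness of $\Fix(\pi)$ so that $L(\Eq)$ is a finite set of values, and then deduce $\CR\subseteq\Eq$ from the strict Lyapunov function of Theorem~\ref{theorem2}. The difference is in how that last inclusion is handled: the paper simply quotes Proposition 3.2 of \cite{B96} (strict Lyapunov function with $L(\Eq)$ of empty interior forces $\CR\subseteq\Eq$), whereas you re-prove that proposition from scratch via a Conley-type argument --- choose a regular value $c$ with $\ell<c<L(p)$, form the attracting block $B=\{L\le c\}$, and show that $(\delta,T)$-pseudo orbits issuing near $p$ get trapped in $B$ and cannot return to $p$. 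What your route buys is a self-contained proof that makes explicit why finiteness (more generally, empty interior) of $L(\Eq)$ is the operative hypothesis; what the paper's route buys is brevity and the avoidance of the one genuinely delicate step, which you yourself flag. Two small repairs to your sketch: in the trapping step you should take $T\ge t_1+\tau$ (not merely $T\ge t_1$), so that the first leg $\phi_{t_0}(y_0)$ lands in $\phi_\tau(B)$ and hence its $\delta$-ball lies in $B$; and $\delta$ must be taken small enough both for $\delta<\delta_0$ and for continuous dependence on $[0,t_1]$ to guarantee $\phi_{t_1}(y_0)\in\{L<c\}$. Since chain recurrence demands pseudo-orbits for \emph{every} $(\delta,T)$, you are free to make these choices, so the fix is routine; alternatively, as you note, one may simply cite the packaged statement, which is exactly what the paper does. (Both your argument and the paper's tacitly read Definition~\ref{dcs} as taking the pseudo-orbit points $y_i$ in $\D$, where $L$ is known to be a strict Lyapunov function; this is the same level of precision as the original.)
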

\begin{proof} The set of fixed points of $\pi$, $\Fix(\pi)$,
equals the set $\Eq$ of equilibrium points of
\eqref{eqn:THE_field}. It is an elementary fact that
$\Eq\subset\CR$. By hypothesis, $\Fix(\pi)$, and therefore 
$\Eq$, is finite. Hence, if $L$ is the strict Lyapunov function in
Theorem \ref{theorem2}, then $L(\Eq)$ is finite. In this case, by
Proposition 3.2 in \cite{B96}, it follows that $\CR\subset\Eq$. We have
proved that $\CR = \Eq = \textrm{Fix}\, (\pi)$.
\end{proof}

\begin{definition}[limit set]\label{limitsetdef} Let $X=\{X(n)\}_{n\ge
0}$ be the process given by \eqref{process2}, which is defined on the
probability space $(\Omega, \F,\Prb)$. Given $\omega\in\Omega$,
we say that $x\in \D$ is a limit point of $\{X(n,\omega)\}_{n\ge 0}$
if there exists a strictly increasing sequence of integers $1\le
n_1<n_2<\cdots$ such that $\lim_{k\to\infty} X(n_k,\omega)=x$.
\end{definition}

\begin{proof}[Proof of Theorem \ref{theorem1}] By the ODE method for
stochastic approximation (see  Appendix A), there exists a full
measure set $\Omega'\subset\Omega$ such that for each $\omega\in\Omega'$,
the limit set of $\left\{ X(n,\omega)\right\}_{n\ge 0}$ is a connected
subset of the set $\CR$ of chain-recurrent points of
\eqref{eqn:THE_field}. By Corollary \ref{cor1}, the connected
components of $\CR$ consists of the fixed points of $\pi$, which are
finitely many. Hence, the limit set of $\left\{
X(n,\omega)\right\}_{n\ge 0}$ is a set consisting of a fixed point of
$\pi$. Since the limit set is a unique point, we conclude that
$\lim_{n\to\infty} X(n,\omega)$ exists and belongs to
$\textrm{Fix}\,(\pi)$.
\end{proof}

\section{Proof of Corollary \ref{corollary1}}\label{sec:corol_proof}

To prove Corollary \ref{corollary1}, we will show that if $\vert \alpha\vert$ is enough small, then the map $\pi$ defined in \eqref{pi} is a contraction on $\D$ with respect to the norm $\Vert x\Vert_1 =  \sum_{(u,i)\in V\times C} \vert x_u^i \vert$, that is,
$\pi$ satisfies the following conditions:
\begin{itemize}
\item [(i)] $\pi$ takes the compact, convex set $\D$ into itself;
\item [(ii)] there exists $0<\lambda<1$\,\, such that \,\, $\Vert \pi(x) - \pi(y)\Vert_1\le\lambda \Vert x-y\Vert_1$, \quad $\forall x,y\in\D$.
\end{itemize} 
Condition (i) was already verified, see the proof of Theorem \ref{theorem2}. To show that Condition (ii) is true, let $J\pi(x) = \big((\partial \pi_u^i/\partial x_v^j)(x)\big)$ denote the Jacobian matrix of $\pi$ at $x\in \D$. Since $\D$ is compact and convex, by the Mean Value Theorem, $\pi$ will be a contraction on $\D$ provided
$\sup_{x\in\D} \left\Vert J\pi(x)\right\Vert_1<1$ or equivalently, if the following condition is true
\begin{itemize}
\item[(iii)]
$\displaystyle \sum_{(u,i)\in V\times C} \left\vert \dfrac{\partial \pi_u^i}{\partial x_v^j}(x) \right\vert <1$, $\quad\forall (v,j) \in V\times C$, $\forall x \in \D$.
\end{itemize}
In fact, for all  $(v,j) \in V\times C$ and $x\in\D$, we have that
$$ \dfrac{\partial \pi_u^i}{\partial x_v^j}(x) = \begin{cases} \phantom{-} 0 & \textrm{if} \quad i=j\\
\phantom{-}\sum_{w\in V{\setminus} \{u\}} \dfrac{\alpha}{\vert E\vert} \varphi'\left(  \sum_{k\in C{\setminus}\{i\}}  \left(\alpha x_{u}^k -
    \alpha x_{w}^k\right)    \right)  & \textrm{if} \quad i\neq j,\,\, u=v\\-\dfrac{\alpha}{\vert E\vert}
  \varphi'\left(  \sum_{k\in C{\setminus}\{i\}}  \left(\alpha x_{u}^k -
    \alpha x_{v}^k\right)    \right) & \textrm{if} \quad i\neq j,\,\, u\neq v\
\end{cases}.
$$ 
In this way, for all  $(v,j) \in V\times C$ and $x\in\D$, since $\edges= d(d-1)/2$, we have that
\begin{align*}
\sum_{(u,i)\in V\times C}
\left\vert \dfrac{\partial \pi_u^i}{\partial x_v^j}(x) \right\vert
&\leq
2(c-1) (d-1) \dfrac{\vert\alpha\vert}{\vert E\vert} \sup_{t\in \R} \left\vert\varphi'(t)\right\vert \\
&=
4(c-1)  \dfrac{\vert\alpha\vert}{d} \sup_{t\in \R} \left\vert\varphi'(t)\right\vert<1.
\end{align*}
Hence, Condition (iii) holds and $\pi$ is a contraction on $\D$. Now since $\D$ is a compact (therefore complete) subset of $\mathbb{R}^{dc}$, 
by the Banach Fixed Point Theorem, we have that $\pi$ has a unique fixed point. We claim that such a fixed point is $x^*=\left(\frac1d,\ldots,\frac1d\right)$. In fact, by $(h2)$, we have that $\varphi(0)=\frac12$. By \eqref{pi}, it follows that 
\[
\pi_u^i(x^*) = 
\dfrac{1}{\vert E\vert } \sum_{w\in V{\setminus}\{u\}}
\varphi\left(  \sum_{k\in C{\setminus}\{i\}}  \left( \dfrac{\alpha}{d} -
     \dfrac{\alpha}{d}\right)    \right) = \dfrac{2}{d(d-1)} (d-1)\varphi(0) = \dfrac1d = (x^*)_u^i.
\]
To conclude the proof, apply Theorem \ref{theorem1}.

\section{Proof of Example \ref{example1}}\label{sec:exampl_proof}

First we provide conditions under which a point $x=(x_1^1,
x_2^1,x_3^1,x_1^2, x_2^2, x_3^2)\in\Delta^2$ is a fixed-point of
$\pi$. We write $x=(y,z)$, where
\[
  y = (y_1,y_2,y_3) = (x_1^1, x_2^1, x_3^1)
  \quad\textrm{and}\quad z =
  (z_1,z_2,z_3) = (x_1^2, x_2^2, x_3^2). 
\]
By \eqref{pi}, we have that $x=(y,z)$ is a fixed-point of $\pi$ if and
only if the following system of equations is satisfied
\begin{equation}\label{dupla}
    \begin{cases}
      y_u = \dfrac{1}{3}\sum_{w\in \{1,2,3\}{\setminus} \{u\}}
      \varphi\big(\alpha z_u -\alpha z_w\big)\\[0.1in] 
      z_u = \dfrac{1}{3}\sum_{w\in \{1,2,3\}{\setminus} \{u\}}
      \varphi\big(\alpha y_u -\alpha y_w\big) 
    \end{cases},\quad u\in \{1,2,3\}. 
\end{equation}
\begin{lemma}\label{lema67} Let $x=(y,z)\in\Delta^2$ be such that $z=y$ and
\begin{equation}\label{yu}
y_u = \dfrac{1}{3}\sum_{w\in \{1,2,3\}{\setminus} \{u\}} \varphi\big(\alpha z_u -\alpha z_w\big),\quad  u\in \{1,2,3\}.
\end{equation}
Then $x$ is a fixed-point of $\pi$.
\end{lemma}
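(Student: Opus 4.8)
The plan is to verify directly that the pair $x=(y,z)$ with $z=y$ satisfies the fixed-point system \eqref{dupla}, which by \eqref{pi} (with $c=2$ and $|E|=3$) characterises membership in $\Fix(\pi)$: a point $x=(y,z)\in\Delta^2$ belongs to $\Fix(\pi)$ if and only if both equations of \eqref{dupla} hold. So it suffices to check those two equations under the standing hypotheses $z=y$ and \eqref{yu}.

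The first equation of \eqref{dupla} reads
\[
y_u = \frac{1}{3}\sum_{w\in\{1,2,3\}\setminus\{u\}}\varphi\big(\alpha z_u-\alpha z_w\big),\quad u\in\{1,2,3\},
\]
which is literally the hypothesis \eqref{yu}; nothing is needed there. For the second equation, $z_u=\frac13\sum_{w\neq u}\varphi(\alpha y_u-\alpha y_w)$, I would use $z=y$ to replace $z_u$ on the left by $y_u$ and each $y_\bullet$ on the right by the corresponding $z_\bullet$, obtaining the identity $y_u=\frac13\sum_{w\neq u}\varphi(\alpha z_u-\alpha z_w)$, which is again exactly \eqref{yu}. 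Hence both equations of \eqref{dupla} hold, and since $x\in\Delta^2$ by assumption, we conclude $x\in\Fix(\pi)$.

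There is no genuine obstacle here: the content of the lemma is the bookkeeping observation that, for $c=2$ colours, the coordinate $\pi_u^1$ depends only on the second-colour coordinates and $\pi_u^2$ only on the first-colour ones, because $k$ ranges over $C\setminus\{i\}$ in \eqref{pi}. Consequently the symmetric ansatz $z=y$ makes the two halves of \eqref{dupla} collapse onto the single scalar system \eqref{yu}. The only point that requires a little care is precisely this index-matching between $i$ and the ``other'' colour $k\in C\setminus\{i\}$; once that is tracked correctly, the verification is a direct substitution.
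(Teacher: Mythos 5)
Your proposal is correct and follows essentially the same route as the paper: the first half of \eqref{dupla} is the hypothesis \eqref{yu} itself, and the second half reduces to \eqref{yu} after swapping $y$ and $z$ via the assumption $z=y$. No issues.
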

\begin{proof} By hypothesis the first three equations in \eqref{dupla}
are satisfied. To prove that the fourth one is true, let $u=1$. Then,
we have that
\begin{eqnarray*} 
z_1 = y_1 &=& \dfrac13 \varphi\big(\alpha z_1 - \alpha z_2 \big) +  \dfrac13 \varphi\big(\alpha z_1 - \alpha z_3 \big)=\dfrac13 \sum_{w\in\{1,2,3\}{\setminus}\{1\}}\varphi\big(\alpha y_1 - \alpha y_w \big),
\end{eqnarray*}
which shows that the fourth equation in \eqref{dupla}
holds. Proceeding in the same way for $u=2$ and $u=3$, we obtain that
the fifth and the sixth equations in \eqref{dupla} are true, implying
that $x=(y,z)=(y,y)$ is a fixed-point of $\pi$.
\end{proof}

In what follows, given a permutation $\sigma$ on the set $V=\{1, 2,
3\}$ and a vector \linebreak $w=(w_1,w_2,w_3)$, we use the notation
$\omega^\sigma=\big(\omega_{\sigma(1)}, \omega_{\sigma(2)},
\omega_{\sigma(3)}\big)$.

\begin{lemma}\label{lema68}  If $x=(y,z)\in\Delta^2$ satisfies  \eqref{yu}, then for any permutation $\sigma$ on the set $V= \{1, 2, 3\}$, we have that
$x^* = (y^\sigma, z^\sigma)$ also satisfies \eqref{yu}.
\end{lemma}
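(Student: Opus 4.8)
The plan is to exploit the permutation-symmetry built into the defining equation \eqref{yu}. Observe that the right-hand side of \eqref{yu} depends on $z=(z_1,z_2,z_3)$ only through the differences $\alpha z_u - \alpha z_w$ over $w \neq u$, and that the sum ranges over \emph{all} vertices $w \neq u$ in the complete graph $V=\{1,2,3\}$. This is precisely the structural feature that makes the system equivariant under relabelling of the vertices.

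First I would write out explicitly what it means for $x^* = (y^\sigma, z^\sigma)$ to satisfy \eqref{yu}. Denoting $y^*_u = y_{\sigma(u)}$ and $z^*_u = z_{\sigma(u)}$, the claim is that
\begin{equation*}
  y^*_u = \frac{1}{3}\sum_{w \in \{1,2,3\}\setminus\{u\}} \varphi\big(\alpha z^*_u - \alpha z^*_w\big), \quad u \in \{1,2,3\}.
\end{equation*}
Substituting the definitions, the right-hand side equals $\tfrac13\sum_{w \neq u} \varphi\big(\alpha z_{\sigma(u)} - \alpha z_{\sigma(w)}\big)$. Now I would reindex the summation via $w' = \sigma(w)$: as $w$ ranges over $\{1,2,3\}\setminus\{u\}$, the image $w' = \sigma(w)$ ranges over $\{1,2,3\}\setminus\{\sigma(u)\}$, since $\sigma$ is a bijection. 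Hence the right-hand side becomes
\begin{equation*}
  \frac{1}{3}\sum_{w' \in \{1,2,3\}\setminus\{\sigma(u)\}} \varphi\big(\alpha z_{\sigma(u)} - \alpha z_{w'}\big),
\end{equation*}
which, by \eqref{yu} applied at the index $\sigma(u)$ (using the hypothesis that $x=(y,z)$ satisfies \eqref{yu}), equals exactly $y_{\sigma(u)} = y^*_u$. This establishes the desired identity for every $u$, so $x^* = (y^\sigma, z^\sigma)$ satisfies \eqref{yu}.

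There is essentially no hard step here; the only point requiring a modicum of care is the reindexing, where one must use that the outer sum in \eqref{yu} is over the full vertex complement $\{1,2,3\}\setminus\{u\}$ — a consequence of $\G$ being the complete graph — so that the permuted index set $\sigma\big(\{1,2,3\}\setminus\{u\}\big)$ is again a complement $\{1,2,3\}\setminus\{\sigma(u)\}$ rather than some arbitrary subset. One should also note, for the statement to make sense, that $x^*=(y^\sigma,z^\sigma)$ still lies in $\Delta^2$: this is immediate since permuting coordinates preserves both nonnegativity and the constraints $\sum_u y_u = \sum_u z_u = 1$. The remark preceding the lemma fixes the notation $w^\sigma$, so no new conventions are needed.
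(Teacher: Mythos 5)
Your proof is correct and follows essentially the same route as the paper: reindex the sum over $\{1,2,3\}\setminus\{u\}$ via the bijection $\sigma$ and invoke \eqref{yu} at the index $\sigma(u)$, exactly as in the paper's chain of equalities (read in the reverse direction). The brief additional check that $(y^\sigma,z^\sigma)\in\Delta^2$ is a harmless extra the paper leaves implicit.
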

\begin{proof} Suppose that $x=(y,z)$ satisfies  $\eqref{yu}$ and let $\sigma$ be a permutation on the set $V=\{1,2,3\}$. Let $x^*=(y^*, z^*)$, where $y^*=y^\sigma$ e $z^*=z^\sigma$. Then, we have that
\begin{eqnarray*}
y_u^* = y_{\sigma(u)} &=&  \dfrac{1}{3}\sum_{w\in \{1,2,3\}{\setminus} \{\sigma(u)\}} \varphi\big(\alpha z_{\sigma(u)} -\alpha z_w\big)\\
  &=&  \dfrac{1}{3}\sum_{w\in \{1,2,3\}{\setminus} \{u\}} \varphi\big(\alpha z_{\sigma(u)} -\alpha z_{\sigma(w)}\big)\\
  &=&  \dfrac{1}{3}\sum_{w\in \{1,2,3\}{\setminus} \{u\}} \varphi\big(\alpha z_u^* -\alpha z_w^* \big)
\end{eqnarray*}
In this way,   $x^*=(y^*, z^*)=(y^\sigma, z^\sigma)$ satisfies \eqref{yu}.
\end{proof}

\begin{proof}[Proof of Example \ref{example1}] First we consider the  point 
\begin{equation*}\label{pointx}
 x = (y,z) = \left(\dfrac{23}{615}, \dfrac13, \dfrac{129}{205}, \dfrac{23}{615}, \dfrac13, \dfrac{129}{205}\right).
\end{equation*}
To show that $x$ is a fixed-point of $\pi$, we will verify the hypotheses of Lemma \ref{lema67}. First notice that $z=y$. Moreover,  it is elementary to show that if $\alpha = \frac{615}{182}\ln\,9$, then the system of equations in \eqref{lema67} is verified. Hence, by \mbox{Lemma \ref{lema67},} $x$ is a fixed-point of $\pi$. Now let $x^*$ be any of the other $5$ points in \eqref{eqn:example_fp}. As $x^* = (y^*, z^*) = (y^\sigma, z^\sigma)$, from Lemmas \ref{lema67} and \ref{lema68} it follows that $x^*$ is a fixed-point of $\pi$. 

Now it remains to prove that at each of the $6$ fixed-points, each eigenvalue of $J\pi(x)$ has absolute value less than $1$. By \eqref{pi},  we have that at each any point $x=(y,z)$ of $\Delta^2$,
$$ \pi_u^i(x) =  \pi_u^i(y,z) =
    \begin{cases}
    \dfrac{1}{3}\sum_{w\in \{1,2,3\}{\setminus} \{u\}} \varphi\big(\alpha z_u -\alpha z_w\big) &\textrm{if} \quad i=1
    \\[0.1in]
 \dfrac{1}{3}\sum_{w\in \{1,2,3\}{\setminus} \{u\}} \varphi\big(\alpha y_u -\alpha y_w\big)&\textrm{if} \quad i=2
    \end{cases},\quad u\in \{1,2,3\}.
$$
In particular, the Jacobian matrix of $\pi$ at $x=(y,y) $ is given by
\begin{equation}\label{forma}
 J\pi(x) = J\pi(y,y) = \begin{pmatrix} 0 & M(y)\\
M(y) & 0
\end{pmatrix},
\end{equation}
where $0$ denotes the null matrix of order $3$, 
\[
 M(y)
 =
 \begin{pmatrix}
    \frac{\alpha}{3} \varphi'\big(y_{12}\big) +
    \frac{\alpha}{3} \varphi'\big(y_{13}\big)  &
   -\frac{\alpha}{3} \varphi'(y_{12}) &
   -\frac{\alpha}{3}\varphi'(y_{13}) \\[.5em] 
   -\frac{\alpha}{3} \varphi'(y_{21}) &
    \frac{\alpha}{3}\varphi'\big(y_{21}\big) +
    \frac{\alpha}{3}\varphi'\big(y_{23}\big)&
   -\frac{\alpha}{3} \varphi'(y_{23})  \\[.5em]
   -\frac{\alpha}{3} \varphi'(y_{31}) &
   -\frac{\alpha}{3} \varphi'(y_{32})  &
    \frac{\alpha}{3} \varphi'(y_{31}) +
    \frac{\alpha}{3} \varphi'(y_{32}) 
\end{pmatrix},
\]
and $y_{uw} = \alpha y_u - \alpha y_w$ for all $u,w\in \{1,2,3\}$ with $u\neq w$.  

Since $\varphi(t)=1/(1+e^{-t})$, by $(h2)$, $\varphi'(t) =
(1-\varphi(t))\varphi(t)=\varphi'(-t)$. Therefore, since $y_{uw}=-y_{wu}$,
we have that $M(y)$ is a symmetric matrix.  Now we will do the
calculation for the point
\begin{equation*}
 x = (y,y) =  \left(\dfrac{23}{615}, \dfrac13, \dfrac{129}{205},
   \dfrac{23}{615}, \dfrac13, \dfrac{129}{205}\right).
\end{equation*}
Since $\alpha = \frac{615}{182}\ln 9$, we have that
\[
  y_{12}=-\ln 9,  \quad y_{13}=-2\ln 9, \quad y_{23} = -\ln 9.
\]
Therefore, since $\varphi'(t) = (1-\varphi(t))\varphi(t)$, we have
that
\begin{equation}\label{MMy}
  M(y)
  = \frac{\alpha }{3}
  \begin{pmatrix}
    \phantom{-}\frac{8577}{84050} & -\frac{9 }{100} & -\frac{81 }{6724} \\[.5em]
    -\frac{9 }{100} &  \phantom{-}\frac{ 9}{50} & -\frac{9 }{100} \\[.5em]
    -\frac{81 }{6724}  & -\frac{9 }{100}   & \phantom{-}\frac{8577}{84050}
  \end{pmatrix}
\end{equation}
We will need the following lemma.

\begin{lemma}\label{square} If $\lambda$ is a real eigenvalue of $J\pi(x)$, then $\lambda^2$ is a real eigenvalue of $\big(M(y)\big)^2$.
\end{lemma}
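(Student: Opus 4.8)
The plan is to exploit the anti-diagonal block structure of $J\pi(x)$ recorded in \eqref{forma}. Suppose $\lambda$ is a real eigenvalue of $J\pi(x)$. Since $J\pi(x)$ is a real matrix, the eigenspace associated with $\lambda$ contains a real eigenvector, so I may write $v = (a,b)^t$ with $a,b\in\mathbb{R}^3$, $v\neq 0$, and $J\pi(x)\,v = \lambda v$. Reading off the two blocks from \eqref{forma} yields the coupled relations $M(y)\,b = \lambda a$ and $M(y)\,a = \lambda b$.

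Next I would left-multiply these identities by $M(y)$ to decouple them: $\big(M(y)\big)^2 a = \lambda\,M(y)\,b = \lambda^2 a$, and in the same way $\big(M(y)\big)^2 b = \lambda^2 b$. Since $v\neq 0$, at least one of $a,b$ is nonzero, and whenever one of them is nonzero the corresponding identity already exhibits $\lambda^2$ as an eigenvalue of $\big(M(y)\big)^2$. The only situation requiring a separate word is when exactly one of them vanishes, say $b=0$ (the case $a=0$ is symmetric). Then $\lambda a = M(y)\,b = 0$ with $a\neq 0$ forces $\lambda = 0$, while $M(y)\,a = \lambda b = 0$ shows $a\in\ker\big(M(y)\big)^2$, so $\lambda^2 = 0$ is again an eigenvalue of $\big(M(y)\big)^2$. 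As $\lambda$ is real, $\lambda^2$ is a real eigenvalue of $\big(M(y)\big)^2$, which is the assertion.

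There is essentially no hard step here: the lemma is a direct consequence of the block form of $J\pi(x)$. The only point demanding a little care is not to overlook the degenerate case in which one half of the eigenvector is zero, which, as noted, forces $\lambda = 0$ and is then dispatched by the kernel argument above.
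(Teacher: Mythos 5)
Your proposal is correct and follows essentially the same route as the paper: read off $M(y)b=\lambda a$ and $M(y)a=\lambda b$ from the block form \eqref{forma} and multiply by $M(y)$ to obtain $\big(M(y)\big)^2 a=\lambda^2 a$ (and likewise for $b$). The only difference is that you also verify which half of the eigenvector is nonzero before concluding; the paper's proof applies the identity to $p$ without remarking that $p$ could vanish, so your version is, if anything, slightly more careful, but the argument is the same.
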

\begin{proof} Let $\lambda$ be an eigenvalue of $J\pi(x)$ associated
to an eigenvector
$$v=\begin{pmatrix} p\\ q\end{pmatrix},\quad
\textrm{where}\quad p=\begin{pmatrix} p_1\\ p_2\\
p_3\end{pmatrix}\quad\textrm{and}\quad q=\begin{pmatrix} q_1\\ q_2\\
                                           q_3\end{pmatrix}.$$
Then, by \eqref{forma}, we have that
$$ \begin{pmatrix} \lambda p\\ \lambda q \end{pmatrix} = \lambda v = J\pi(x)v = \begin{pmatrix} 0 & M(y)\\
M(y) & 0
\end{pmatrix}\begin{pmatrix} p\\q\end{pmatrix}=\begin{pmatrix} M(y) q\\ M(y)p\end{pmatrix}.$$ 
In particular, we have that
$$ \big(M(y)\big)^2 p = M(y) M(y) p = M(y) \lambda q = \lambda M(y) q = \lambda^2 p,
$$
showing that $\lambda^2$ is eigenvalue of $\big(M(y)\big)^2$.
\end{proof}
By \eqref{MMy}, the eigenvalues of $M(y)$ are approximately $0$,
$0.28$ and $0.66$. Therefore, the eigenvalues of $\big(M(y)\big)^2$
are real and less than $1$. By \eqref{forma}, $J\pi(x)$ is symmetric,
therefore the eigenvalues of $J\pi(x)$ are real. By Lemma
\ref{square}, the eigenvalues of $J\pi(x)$ are real numbers with
absolute value less than $1$. More precisely, if $\lambda$ is
eigenvalue of $J\pi(x)$, then $\lambda$ is real and $\lambda^2\in
[0,1)$. This implies that $\lambda\in (-1,1)$. Hence, the spectral
radius of $J\pi(x)$ is less than $1$.  The same procedure can be
applied to the other $5$ fixed-points of $\pi$.
\end{proof}

\appendix

\section{Stochastic approximations}

In what follows, denote by $(\Omega,\F, \mathbb{P})$ the
probability space where the process $X=\{X(n)\}_{n\ge 0}$ given by
\eqref{process2} is defined. Set $\F_0=\{\emptyset,
\Omega\}$ and for each $n\ge 1$, let $\F_n$ be the
$\sigma$-algebra generated by the process  {$X$ from time 1} up to
time $n$. We denote by $F:\mathbb{R}^{dc}\to\mathbb{R}^{dc}$ the
vector field defined in \eqref{eqn:THE_field}.
 
\begin{lemma}\label{plem} The process $X=\{X(n)\}_{n\ge 0}$ defined by
\eqref{process2} satisfies the recursion
\begin{equation}\label{eqn:SA}
  X(n+1) - X(n) =  {\gamma_{n}}\big(F(X(n)) +  {U(n)}\big),\quad
  n\ge  0,
\end{equation}
where
\begin{enumerate}[nosep]
\item [$(i)$] $\{\gamma_n\}_{n\ge 0}$ is a deterministic sequence of
  positive numbers;  
\item [$(ii)$] $\sum_{n\ge 0} \gamma_n=\infty$ \,\,\textrm{and}\,\,
$\sum_{n\ge 0} \gamma_n^2<\infty$;
\item [$(iii)$] $U(n):\Omega\to \mathbb{R}^{dc}$ is a
  $\F_n$-measurable map for each $n\ge 0$;
\item [$(iv)$] $\E[ {U(n)} \mid \F_n]=0$ for all  $n\ge 0$;
\item [$(v)$] The sequence $\left\{\sum_{k=0}^n \gamma_k
    U(k)\right\}_{n\ge 0}$ converges (a.s.).
\end{enumerate} 
\end{lemma}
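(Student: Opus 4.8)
The plan is to expand the one-step update of the colour proportions, read off the gain sequence, the drift and the noise so as to recover exactly \eqref{eqn:SA}, and then verify $(i)$--$(v)$ directly. First I would note that the total number of balls of colour $i$ at time $n$ is deterministic: at every step exactly one colour-$i$ ball is placed on each of the $\edges$ edges, and $\sum_{u\in V}B_u^i(0)=b_0$ for all $i$, so $S_n:=\sum_{u\in V}B_u^i(n)=b_0+n\edges$, independently of $i$. Writing $\Delta_u^i(n+1)=\sum_{w\in V\setminus\{u\}}\mathbf 1_{B_{w\to u}^i(n+1)}$ for the number of colour-$i$ balls entering urn $u$ at step $n+1$, so that $B_u^i(n+1)=B_u^i(n)+\Delta_u^i(n+1)$, a short computation from \eqref{process} yields
\[
X_u^i(n+1)-X_u^i(n)=\frac{\edges}{S_{n+1}}\left(\frac{\Delta_u^i(n+1)}{\edges}-X_u^i(n)\right),\qquad n\ge 0.
\]

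I would then set $\gamma_n=\edges/S_{n+1}=\edges/(b_0+(n+1)\edges)$ and define the noise vector $U(n)=\big(\Delta_u^i(n+1)/\edges-\pi_u^i(X(n))\big)_{u\in V,\,i\in C}$; recalling $F(x)=-x+\pi(x)$, the displayed identity is precisely $X(n+1)-X(n)=\gamma_n\big(F(X(n))+U(n)\big)$, i.e.\ \eqref{eqn:SA}. Properties $(i)$ and $(ii)$ are then clear, since $\gamma_n$ is a deterministic positive sequence with $\gamma_n\sim 1/n$, so $\sum_n\gamma_n=\infty$ and $\sum_n\gamma_n^2<\infty$. For $(iii)$, $U(n)$ is a function of $X(n)$ and of the choices made at step $n+1$, hence is $\F_{n+1}$-measurable. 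For $(iv)$, conditionally on $\F_n$ the indicator $\mathbf 1_{B_{w\to u}^i(n+1)}$ has mean $\varphi\big(\sum_{k\in C\setminus\{i\}}(\alpha X_u^k(n)-\alpha X_w^k(n))\big)$ by \eqref{rule3}, so $\E[\Delta_u^i(n+1)\mid\F_n]=\edges\,\pi_u^i(X(n))$ by \eqref{pi} and therefore $\E[U(n)\mid\F_n]=0$.

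Finally, for $(v)$ I would note that, by $(iv)$, the partial sums $M_n=\sum_{k=0}^{n}\gamma_kU(k)$ form a vector-valued martingale with respect to $\{\F_{n+1}\}$. Since $\Delta_u^i(n+1)\le d-1$ and $\edges=d(d-1)/2$, one has $0\le\Delta_u^i(n+1)/\edges\le 2/d$, while $0<\pi_u^i<1$, so $U(n)$ is uniformly bounded, $\|U(n)\|\le C$ for some $C=C(c,d)$; hence $\sum_n\E\|\gamma_nU(n)\|^2\le C^2\sum_n\gamma_n^2<\infty$, and $\{M_n\}$ converges almost surely by the $L^2$-martingale convergence theorem. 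I do not expect a genuine obstacle here: the only real content is getting the algebra of the proportion update right in the first step so that the clean form \eqref{eqn:SA} emerges, after which $(i)$--$(iv)$ are bookkeeping and $(v)$ is the standard square-summable-noise martingale argument resting on the uniform boundedness of $U(n)$.
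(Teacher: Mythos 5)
Your proposal is correct and follows essentially the same route as the paper: the same expansion of the increment of \eqref{process} using the deterministic total $b_0+n\edges$, the same choice $\gamma_n=\edges/(b_0+(n+1)\edges)$, the same noise $U(n)=\xi(n)-\E[\xi(n)\mid\F_n]$ centred via \eqref{rule3} and \eqref{pi}, and a martingale argument for $(v)$ based on the uniform boundedness of the noise and $\sum_n\gamma_n^2<\infty$. The only cosmetic differences are that you invoke $L^2$-martingale convergence through unconditional second moments where the paper bounds conditional variances and cites Durrett, and that you observe $U(n)$ is $\F_{n+1}$-measurable (rather than $\F_n$-measurable as stated in item $(iii)$), which in fact matches the paper's own use of the filtration $\{\F_{n+1}\}$ in its proof of $(v)$.
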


\begin{proof}[Proof of $(i)$-$(iv)$]
Using \eqref{rule3} define
\begin{equation}\label{eqn:Niv}
  N^i_u(n+1) = \sum_{w \in V{\setminus}\{u\}} \mathbf{1}_{B^i_{w\to 
      u}(n+1)}.
\end{equation}
That is, $N^i_u(n+1)$ is the number of balls of colour $i$ put into
the urn at vertex $u$ at time $n+1$. We recall that the number of
balls of colour $i$ at urn $u$ at time $n$ is denoted by
{$B_u^i(n)$}. Moreover, as an hypothesis of the model, we assumed that
there exists $b_0\ge 1$ such that $\sum_{u\in V} B_u^i(0)=b_0$ for all
$i\in C$. In this way, for each $i \in C$ and $n\ge 0$,
\[
  \sum_{u \in V} B_u^i(n) = b_0 + n\edges.
\]
By \eqref{process}, the increment in the proportion of balls of colour
$i$ at vertex $u$ at time $n+1$ equals
\begin{equation}\label{eqn:increment}
 \begin{aligned}
  X^i_u(n+1)-X^i_u(n)
  &=
    \frac{B^i_u(n) + N^i_u(n+1)}{b_0 + (n+1)\edges} -
    \frac{B^i_u(n)}{b_0+n\edges}                            
  =
    \frac{-\edges X^i_u(n) + N^i_u(n+1)}{b_0 + (n+1)\edges} \\ 
  &=
    \frac{1}{b_0/\edges +n+1}\Big(-X^i_u(n) + \frac{1}{\edges}
    N^i_u(n+1)\Big).
  \end{aligned}
\end{equation}
Set 
\begin{equation}\label{eqn:xi}
 {\gamma_{n}} = 1/(b_0/\edges + n+1) \quad\textrm{and}\quad
\xi^i_u( {n}) = \frac{1}{\edges} N^i_{u}(n+1).
\end{equation}
For each $ {n\ge 1}$, define $\xi(n) = \big(\xi^1_1(n)$, $\ldots$,
$\xi^1_d(n)$, $\ldots$ $\xi^c_1(n)$, $\ldots$, $\xi^c_d(n)\big)$. By
considering the expression for the increments of each component of $X$
given by \eqref{eqn:increment}, it follows that  {for any $n\ge 0$,}
\begin{equation}\label{XX}
  X(n+1) - X(n)
  =
  \gamma_{ {n}} \big(-X(n) + \xi( {n})\big).
\end{equation}
According to \eqref{eqn:xi} and \eqref{eqn:Niv}, for any $i
\in C$,
\begin{align}\label{EEE}
\begin{split}
  \E\big[\xi^i_u( {n}) \mid \F_n\big] 
  = 
  \frac{1}{\edges} \sum_{w \in V{\setminus}\{u\}}
  \E\big[\mathbf{1}_{B^i_{w\to u}(n+1)}\mid \F_n\big]
  =
  \frac{1}{\edges} \sum_{w \in V{\setminus}\{u\}}\Prb\big(B^i_{w\to 
    u}(n+1) \mid \F_n\big).
\end{split} 
\end{align}
Markov's property implies that $\Prb\big(B^i_{w\to u}(n+1) \mid
\F_n\big) = \Prb\big(B^i_{w\to u}(n+1) \mid
 {X(n)}\big)$. Hence, by \eqref{EEE}, \eqref{rule3}, \eqref{pi},
we obtain
\begin{equation}\label{E4}
  \E\big[\xi^i_u( {n}) \mid \F_n\big] =  \frac{1}{\edges} \sum_{w
    \in  V{\setminus}\{u\}}\Prb\big(B^i_{w\to
    u}(n+1) \mid  {X(n)} \big) = \pi_u^i\big(X(n)\big).
 \end{equation}
By \eqref{E4} and \eqref{eqn:THE_field}, 
 \begin{equation}\label{-X2}
 -X(n) = F\big(X(n)\big) - \pi \big(X(n)\big)= F\big(X(n)\big) -
 \E\big[\xi( {n+1}) \mid \F_n\big].
 \end{equation}
Replacing $-X(n)$ in the right hand side of  {\eqref{XX}} by
\eqref{-X2} yields
  $$X(n+1) - X(n)
  =
  \gamma_{ {n}} \left(F\big(X(n)\big)+ U( {n})\right), 
  $$
where  {$U(n) = \xi(n) - \E[\xi(n)\mid \F_n]$}.
  
We have proved that the increments of $X$ satisfy
\eqref{eqn:SA}. Moreover, it follows from the expressions of
$\gamma_{n}$ and $U(n)$ that $(i)$-$(iv)$ hold.
\\

\noindent \textit{Proof of $(v)$:} Let $M_n = \sum_{k=0}^n \gamma_k 
U(k)$. An immediate verification shows that $\{M_n; n\geq 0\}$ is a
martingale adapted to the filtration $\{\F_{n+1}: n\geq 0\}$, that is
$\E[M_{n+1}\mid\F_{n+1}] = M_n$.  It then follows that  almost
surely
\begin{align*}
  {\sum_{k=0}^n} \E\big[\|M_{k+1} - M_k\|^2\big|\  {\F_{k+1}}\big]
  &=
     {\sum_{k=0}^n} \gamma_{k+1}^2  \E\big[\| {U(k+1)}\|^2\big|\  
     {\F_{k+1}}\big] \\
  &\leq
     {\sum_{k=0}^n} \gamma_{k+1}^2 \bigg(\sum_{u \in V}\sum_{i \in C}
     {\xi^i_u(k+1)}\bigg)^2  
  \leq
    \bigg(c d
    \frac{(d-1)}{\edges}\bigg)^2  {\sum_{k=0}^n} \gamma^2_{k+1} \\
  &\leq  
  4 c^2 \sum_{k\geq 0} \gamma^2_{k+1} < \infty.
\end{align*}
 {This implies that $M_n$ converges almost surely to a finite limit,
see Theorem 5.4.9 in \cite{D10}}, and hence that $\{M_n: n\geq 0\}$ is
a Cauchy sequence.  {This concludes the proof of
assertion $(v)$.}
\end{proof}

In what follows, we will need Definitions \ref{dcs} and
\ref{limitsetdef}. 

\begin{theorem} Let $X=\{X(n)\}_{n\ge 0}$ be the process given by
\eqref{process2}, which is defined on the probability space
$(\Omega, \F, \Prb)$. There exists a full measure set
$\Omega'\subset\Omega$ such that for each $\omega\in \Omega'$, the
limit set of $\{X(n,\omega)\}_{n\ge 0}$ is a connected set contained
in the chain-recurrent set of \eqref{eqn:THE_field}.
\end{theorem}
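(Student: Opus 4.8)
The plan is to recognise the recursion \eqref{eqn:SA} as a Robbins--Monro stochastic approximation scheme and to invoke the general theory for such schemes developed by Bena\"im \cite{B96}. Lemma \ref{plem} has already verified all the hypotheses of that theory: the process satisfies $X(n+1)-X(n)=\gamma_n\big(F(X(n))+U(n)\big)$ with a deterministic gain sequence obeying $\sum_n\gamma_n=\infty$ and $\sum_n\gamma_n^2<\infty$, with $U(n)$ an $\F_{n+1}$-measurable martingale difference, and with $\big\{\sum_{k=0}^n\gamma_k U(k)\big\}_{n\ge 0}$ convergent almost surely. Let $\Omega'\subset\Omega$ be the full-measure event on which this last series converges. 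Moreover, by \ref{deriv_Hyp} and \ref{odd_Hyp} the field $F$ of \eqref{eqn:THE_field} is globally Lipschitz, and by Theorem \ref{theorem2}$(i)$ the set $\D$ is compact and invariant, so every sample path of $X$ stays in $\D$; in particular its limit set is automatically precompact.

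Next I would pass to continuous time. Set $\tau_0=0$, $\tau_{n+1}=\tau_n+\gamma_n$, define $\bar X(\tau_n):=X(n)$, and interpolate affinely between consecutive times $\tau_n,\tau_{n+1}$. The core step is to show that, on $\Omega'$, the path $t\mapsto\bar X(t)$ is an asymptotic pseudotrajectory of the semiflow $\phi_t$ of \eqref{eqn:THE_field}, i.e.\ for every $T>0$,
\[
  \lim_{t\to\infty}\ \sup_{0\le s\le T}\ \big\|\bar X(t+s)-\phi_s\big(\bar X(t)\big)\big\|=0.
\]
This is the standard estimate: over a time window of length $T$, the accumulated martingale noise is bounded by the Cauchy tail of $\sum_k\gamma_k U(k)$ (which tends to $0$ on $\Omega'$), the accumulated discretisation error is $O\big(\sup_{k\ge n}\gamma_k\big)=o(1)$ because $F$ is bounded on the compact set $\D$, and a Gr\"onwall argument with the Lipschitz constant of $F$ turns these into a uniform bound over $[0,T]$ that vanishes as $t\to\infty$.

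Finally, I would quote Bena\"im's structural result \cite{B96}: the limit set of a precompact asymptotic pseudotrajectory is a nonempty, compact, connected, internally chain transitive set, and every internally chain transitive set is contained in the chain-recurrent set $\CR$ of \eqref{eqn:THE_field}. Applying this on $\Omega'$ to the precompact path $t\mapsto\bar X(t)$, whose limit set coincides with the limit set of $\{X(n,\omega)\}_{n\ge0}$ in the sense of Definition \ref{limitsetdef}, gives exactly the asserted connectedness and the inclusion in $\CR$. I expect the only genuinely technical point to be the asymptotic pseudotrajectory estimate above, and within it the uniform control over the finite window of the cumulative martingale increments; the remaining ingredients (Lipschitz and boundedness bounds on $F$, compactness and invariance of $\D$, the Robbins--Monro conditions on $\gamma_n$) have already been established in Lemma \ref{plem} and Theorem \ref{theorem2}$(i)$.
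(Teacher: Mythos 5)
Your proposal is correct and follows essentially the same route as the paper: the paper's proof simply observes that the conditions verified in Lemma \ref{plem} are exactly the hypotheses of Bena\"im's dynamical-system approach to stochastic approximations and cites \cite{B96}, whereas you additionally sketch the content of that citation (interpolated process, asymptotic pseudotrajectory estimate, limit sets being internally chain transitive and hence contained in $\CR$). The only minor point is bookkeeping of measurability: as in the paper's construction $U(n)=\xi(n)-\E[\xi(n)\mid\F_n]$ is $\F_{n+1}$-measurable with $\E[U(n)\mid\F_n]=0$, which is what you use and is consistent with the intended statement of Lemma \ref{plem}.
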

\begin{proof} This is a well established result for any process
satisfying the list of conditions in the statement of Lemma
\ref{plem}, see for instance \cite{B96}.
\end{proof}

\section{Hopfield's energy function} 

\begin{theorem}[Hopfield]\label{thmH} Let $\Lambda$ be a finite set
and $G=(G_\mu)_{\mu\in\Lambda}: \mathbb{R}^{\vert\Lambda\vert} \to
\mathbb{R}^{\vert\Lambda\vert}$ be the vector field whose
coordinate-function at $z=(z_{\mu})$ takes the value
\begin{equation}\label{G}
 G_{\mu}(z) = - z_{\mu} +\sum_{\nu\in\Lambda}  w_{\mu\nu}
 \varphi(z_{\nu}) + I_{\mu}, \quad \mu\in\Lambda,
\end{equation}
where $W=(w_{\mu\nu})$ is a symmetric real matrix, $I_{\mu}, \mu\in
\Lambda$, are constants and $\varphi:\mathbb{R}\to (0,1)$ is any
function satisfying the hypothesis \ref{deriv_Hyp}. Then the function
$L:\mathbb{R}^{\vert\Lambda\vert}\to\mathbb{R}$ defined by
\[
L (z) = -\frac12\sum_{\mu, \nu\in\Lambda}
  w_{\mu\nu}\, 
\varphi (z_{\mu} ) \varphi (z_{\nu} ) + \sum_{\mu\in\Lambda}
\int^{\varphi(z_{\mu})} \varphi^{-1}(\eta) 
\mathrm{d}\eta  +
\sum_{\mu\in\Lambda}   I_{\mu} \varphi(z_{\mu}), 
\]
 is a strict Lyapunov function for the ordinary differential equation 
\begin{equation}\label{ode3}
\dot{z} =  G(z)
\end{equation}
on $\mathbb{R}^{\vert\Lambda\vert}$ and it is called  Hopfield's
energy function. 
\end{theorem}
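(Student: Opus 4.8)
The plan is to reproduce Hopfield's classical energy computation: differentiate $L$ along an orbit of \eqref{ode3}, show the result is a nonpositive quadratic form that vanishes exactly on the equilibrium set, and then use uniqueness of solutions to upgrade ``non-increasing'' to ``strictly decreasing'' off the equilibria.

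\emph{Step 1 (gradient of $L$).} First I would compute $\partial L/\partial z_\mu$ for each $\mu\in\Lambda$. Hypothesis \ref{deriv_Hyp} forces $\varphi$ to be strictly increasing, hence a $C^\infty$ diffeomorphism onto its image, so $\varphi^{-1}$ is defined and $C^1$ near each value $\varphi(z_\mu)$; the unspecified lower endpoint of $\int^{\varphi(z_\mu)}\varphi^{-1}(\eta)\,d\eta$ only shifts $L$ by an additive constant and does not affect its derivative. By the fundamental theorem of calculus and the chain rule this term contributes $\varphi^{-1}\big(\varphi(z_\mu)\big)\varphi'(z_\mu)=z_\mu\varphi'(z_\mu)$. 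The quadratic term contributes $-\varphi'(z_\mu)\sum_{\nu\in\Lambda}w_{\mu\nu}\varphi(z_\nu)$, where the symmetry $w_{\mu\nu}=w_{\nu\mu}$ is precisely what cancels the prefactor $1/2$; the linear term contributes $I_\mu\varphi'(z_\mu)$. Collecting,
\[
\frac{\partial L}{\partial z_\mu}(z)=-\varphi'(z_\mu)\Big(-z_\mu+\textstyle\sum_{\nu\in\Lambda}w_{\mu\nu}\varphi(z_\nu)+I_\mu\Big)=-\varphi'(z_\mu)\,G_\mu(z).
\]

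\emph{Step 2 (decay along orbits and strictness).} Let $z:[0,\infty)\to\mathbb{R}^{\vert\Lambda\vert}$ be a solution of \eqref{ode3}, so $\dot z_\mu=G_\mu(z)$. By the chain rule and Step 1,
\[
\frac{d}{dt}L\big(z(t)\big)=\sum_{\mu\in\Lambda}\frac{\partial L}{\partial z_\mu}\big(z(t)\big)\dot z_\mu(t)=-\sum_{\mu\in\Lambda}\varphi'\big(z_\mu(t)\big)\,G_\mu\big(z(t)\big)^2.
\]
Since $\varphi'>0$ everywhere by \ref{deriv_Hyp}, the right-hand side is $\le 0$, and it equals $0$ if and only if $G_\mu(z(t))=0$ for all $\mu$, i.e.\ if and only if $z(t)\in\Eq$. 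Then I would argue strict monotonicity by contradiction: if $t\mapsto L(z(t))$ were not strictly decreasing, then, being non-increasing, it would be constant on some non-degenerate interval, hence have vanishing derivative there, hence satisfy $G(z(t))=0$ and so $\dot z(t)=0$ on that interval; thus $z$ would be constant there, and by uniqueness of solutions of \eqref{ode3} (valid since $G$ is smooth, hence locally Lipschitz) $z$ would be constant on all of $[0,\infty)$. Contrapositively, every non-constant solution makes $t\mapsto L(z(t))$ strictly decreasing, which is exactly the defining property of a strict Lyapunov function in the sense of Definition \ref{slf}; continuity (indeed $C^1$-regularity) of $L$ follows from smoothness of $\varphi$ and $C^1$-regularity of $\varphi^{-1}$ on the range of $\varphi$.

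I do not anticipate a genuine obstacle here; the only two points that require a word of care are the well-definedness and $C^1$-regularity of the term $\int^{\varphi(z_\mu)}\varphi^{-1}(\eta)\,d\eta$, both of which follow from \ref{deriv_Hyp}, and the passage from monotone non-increase to strict decrease, which is handled by uniqueness of solutions of the differential equation together with the observation that $G(z_0)=0$ is equivalent to $z_0\in\Eq$.
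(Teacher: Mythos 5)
Your proposal is correct and follows essentially the same route as the paper: differentiate $L$ along a solution, use the symmetry of $W$ to obtain $\frac{d}{dt}L(z(t))=-\sum_{\mu}\varphi'(z_\mu(t))\,G_\mu(z(t))^2$, and conclude strict decrease from $\varphi'>0$. Your handling of the strictness step (ruling out an interval of constancy via uniqueness of solutions) is in fact slightly more careful than the paper's brief assertion that $G_\mu(z(t))^2>0$ for all $t$ when the initial point is not an equilibrium, but it is the same argument in substance.
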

\begin{proof} The proof we present is adapted from
\cite{WOS:A1984SU47600030}, where it appears in a different
context and with another notation. By direct derivation, since
$W=(w_{\mu\nu})$ is symmetric, we have that for all $t\ge 0$,
\begin{equation}\label{L4}
\begin{split}
\dfrac{d}{dt} L\big(z(t)\big) 
= 
\sum_{\mu\in\Lambda}
\Bigg(-\sum_{\nu\in\Lambda} w_{\mu\nu}
  \varphi(z_{\nu}(t))&\varphi'(z_{\mu}(t))+
  z_{\mu}(t)\varphi'(z_{\mu}(t))                        \\
  &+ I_{\mu} \varphi'(z_{\mu}(t))\Bigg) 
  G_{\mu}\big(z(t)\big),
\end{split}
\end{equation} 
where $z(t)$ denotes the solution of \eqref{ode3} that satisfies
$z(0)=z^{(0)}\in\mathbb{R}^{\vert\Lambda\vert}$. By using \eqref{G},
we may rewrite \eqref{L4} in the form
\[
  \dfrac{d}{dt} L\big(z(t)\big) =
  -\sum_{\mu\in\Lambda}\varphi'\big(z_\mu(t)\big)
  \left[G_{\mu}\big(z(t)\big)  \right ]^2.
\]
If $z^{(0)}$ is not an equilibrium point of \eqref{ode3}, then
$\left[G_{\mu}\big(z(t)\big) \right ]^2>0$ for all $t\ge 0$. Moreover,
by the hypothesis \ref{deriv_Hyp}, we have
$\varphi'\big(z_\mu(t)\big)>0$ for all $t\ge 0$. This shows that $L$
is a strict Lyapunov function for \eqref{ode3} on
$\mathbb{R}^{\vert\lambda\vert}$. The other condition in hypothesis
\ref{deriv_Hyp} regarding the boundedness of $\varphi'$ is important
to ensure that the solutions of \eqref{ode3} are defined for all $t\ge
0$.
\end{proof} 

To use Theorem \ref{thmH} in our article, consider the notation used
in \eqref{eq-H2} and set
\begin{gather*}
  \Lambda = \{(i,u,v): i\in C, u,v\in V, u\neq v\},\\
  \mu=(i,u,v)\in\Lambda,\quad \nu=(j,r,s)\in\Lambda,\quad
  I_{\mu}=m, \quad w_{\mu\nu} = w_{iuv}^{jrs}.
\end{gather*}

\ \\

\noindent \textbf{Acknowledgements}. We acknowledge Paulo Jos\'e
  Rodriguez for writing the code of the simulations shown in
  Figure~\ref{fig:triangles}.

\bibliographystyle{amsplain}

\providecommand{\bysame}{\leavevmode\hbox to3em{\hrulefill}\thinspace}
\providecommand{\MR}{\relax\ifhmode\unskip\space\fi MR }
\providecommand{\MRhref}[2]{%
  \href{http://www.ams.org/mathscinet-getitem?mr=#1}{#2}
}
\providecommand{\href}[2]{#2}

\end{document}